\theoremstyle{plain}
\newtheorem{Theorem}{Theorem}[section]
\newtheorem{Corollary}[Theorem]{Corollary}
\newtheorem{Proposition}[Theorem]{Proposition}
\newtheorem{Lemma}[Theorem]{Lemma}
\newtheorem{Def}[Theorem]{Definition}
\newtheorem*{Remark}{Remark}
\newtheorem*{Theorem*}{Theorem}
\newtheorem*{Proposition*}{Proposition}
\newtheorem*{Corollary*}{Corollary}
\newcommand{\C}{\mathbb{C}}
\newcommand{\F}{\mathbb{F}}
\newcommand{\N}{\mathbb{N}}
\newcommand{\Q}{\mathbb{Q}}
\newcommand{\Z}{\mathbb{Z}}
\newcommand{\GM}{\operatorname{\Gamma_{\mathrm{M}}}}
\newcommand{\LD}{\operatorname{Log\Gamma_{\mathrm{D}}}}
\newcommand{\LM}{\operatorname{Log\Gamma_{\mathrm{M}}}}
\newcommand{\LP}{\operatorname{Log\Gamma_{\mathit{p}}}}
\newcommand{\GC}{\operatorname{\Gamma}}
\newcommand{\LG}{\operatorname{\log\Gamma}}
\newcommand{\absp}[1]{\left|{#1}\right|_p}
\newcommand{\ol}[1]{\overline{#1}}
\newcommand{\RP}{\operatorname{\mathrm{R}_\mathit{p}}}
\newcommand{\rp}{\operatorname{\mathrm{r}_\mathit{p}}}
\newcommand{\zcp}{\mathcal{Z}_p}
\newcommand{\al}{\alpha}
\begin{document}

\pagestyle{plain}

\title{A distribution formula for Kashio's $p$-adic log-gamma function}
\author{Eugenio Finat}
\address{Universidad de Chile, Facultad de Ciencias, Casilla 653, Santiago, Chile}
\email{e\_finat@yahoo.com}
\begin{abstract}
We study a special case of Kashio's $p$-adic $\LG$-function, that we call $\LP$,
which combines these of Morita 
and Diamond. It agrees with each of these on large parts of its domain and 
has the advantage of being a locally analytic  function. We prove a
distribution formula for $\LP$ which generalizes and links the known distribution
formulas for Diamond's and Morita's functions.
\end{abstract}
\maketitle

\section{Introduction}

Let $p$ be a fixed prime number, 
and let $\Z_p$, $\Q_p$, $\C_p$ and $\zcp$
denote, respectively, the ring of $p$-adic integers,
the field of fractions of $\Z_p$, the completion
of the algebraic closure of $\Q_p$, and 
the ring of integers of $\C_p$.  For any $x\in\zcp$,
let $\ol{x}$ be its natural image in the residue field of $\C_p$, which is isomorphic
to the algebraic closure $\ol{\F}_p$ of the finite field $\F_p$.
Also, if $x\in\zcp$ and $\ol{x}\in\F_p$ we write $\ell(x)$
for the unique natural number satisfying $1\le\ell(x)\le p$ and
$\ol{x}=\ol{\ell(x)}$. Finally, we will use the convention
 $\N_0= \N \cup\big\{0\big\}$.

In the mid 1970's, two 
$p$-adic analogues of the classical $\LG$-function 
were defined by Yasuo Morita \cite{Mor} and Jack Diamond \cite{Dia}.
Here $\LG$ is the logarithm of the classical $\GC$-function
satisfying the difference equation
\begin{equation}
\label{ClassicDiff}
\LG(x+1)-\LG(x)=\log(x) \qquad  (x>0).
\end{equation}
The function $\LG$ satisfies the  distribution formula
\begin{equation}
\label{distLG}
\sum_{k=0}^{n-1}\LG\bigg(\frac{x+k}{n}\bigg)=\LG(x)+
\frac{n-1}2\log(2\pi)+\bigg(\frac12-x\bigg)\log(n)
\quad(x>0,n\in\N).
\end{equation}
Also,  $\LG$ is the unique convex function defined on $(0,\infty)$
satisfying $\LG(1)=0$ and the difference equation \eqref{ClassicDiff}.

Morita \cite{Mor} defined a $p$-adic
analogue of $\GC$, which we will call $\GM$,
having $\Z_p$ as its domain and taking values in the units $\Z_p^*$.
For positive integers $n$, $\GM$ is defined as
$$
\GM(n):=(-1)^n\prod_{\substack{1\le j<n\\ p\nmid j}}j.
$$
Morita proved that this function is continuous on $\N$ (with the
$p$-adic topology) and extended
to a continuous function on $\Z_p$. He also showed that
$\GM:\Z_p\to\Z_p^*$
satisfies the functional equation
\begin{equation}
\label{MGammaRatio}
\frac{\GM(x+1)}{\GM(x)}=
\begin{cases}\displaystyle
  -x\quad &\text{if }x\in\Z_p^*, \\
  -1 \quad &\text{if } x\in p\Z_p.
\end{cases}
\end{equation}
Since $\GM$ is continuous on $\Z_p$, it is completely characterized by
\eqref{MGammaRatio} and by its value $\GM(1)=-1$. 

To prove analytic properties of his function, Morita actually
worked with the Iwasawa $p$-adic logarithm $\log_p$ of $\GM$
\cite[$\S$V.4.5]{Rob} \cite[$\S$45]{Sch1}.
We will write this function $\LM$, i.e.,
$$
\LM(x):=\log_p\GM(x)
\qquad\qquad(x\in\Z_p).
$$

An important property of $\LM$ is that it has a
power series expansion around $0$, valid for all $x\in p\Z_p$,
and this power series actually defines an analytic function
on the open unit ball  $B(0;1^-):=\big\{x\in\C_p\,\big|\,\absp{x}<1\big\}
\subset\C_p$ \cite[Lemma 58.2]{Sch1}. 
Hence, we can extend the domain of $\LM$ to $B(0;1^-)\cup\Z_p$.
It can be shown that with this extended definition
$\LM$ is no longer the Iwasawa 
logarithm of a $p$-adic function.

Taking the Iwasawa logarithm on both sides of \eqref{MGammaRatio},
we find that $\LM$ satisfies the difference equation
\begin{equation}
\label{MorDiff}
\LM(x+1)-\LM(x)=
\begin{cases}\displaystyle
  \log_p(x)\quad &\text{if }x\in\Z_p^*, \\
  0 \quad &\text{if } x\in p\Z_p ,
\end{cases}
\end{equation}
in analogy to \eqref{ClassicDiff}. It is
again immediate that $\LM$ is uniquely determined
by the value $\LM(1)=0$ and by the difference
equation \eqref{MorDiff}. Morita's function
satisfies the reflection formula \cite[Prop.~11.5.13.(2)]{Coh}
\begin{equation}
\label{MorRef}
\LM(1-x)+\LM(x)=0 \qquad\qquad (x\in\Z_p),
\end{equation}
and can be given by the integral formula \cite[$\S58$]{Sch1}
\begin{equation}
\label{VolkenMor}
\LM(x) =\int_{\Z_p}
(x+t)\big(\log_p(x+t)-1\big)
\chi^{\phantom{-1}}_{\Z_p^*}\!(x+t)dt
\qquad (x\in\Z_p).
\end{equation}
Here $\chi^{\phantom{-1}}_{\Z_p^*}$ denotes the characteristic
function of $\Z_p^*$, and the integral on the
right is the
Volkenborn integral: if $g:\Z_p\to\C_p$ then
$g$ is Volkenborn integrable if the limit
$$
\int_{\Z_p} g(t)dt:=
\lim_{n\to\infty}\frac1{p^n}\sum_{j=0}^{p^n-1} g(j)
$$
exists, and this value is called the Volkenborn integral of $g$  \cite[$\S$V.5]{Rob} \cite[$\S$55]{Sch1}.

Morita's function satisfies, in analogy with \eqref{distLG}, the distribution formula
\cite[Prop.~11.5.13.(4)]{Coh}
\begin{equation}
\label{distLM}
\sum_{0\le j<n}\LM\bigg(\frac{x+j}{n}\bigg)=\LM(x)-
\bigg(x-\left\lceil\frac{x}{p}\right\rceil\bigg)\log_p(n)\quad(x\in\Z_p, n\in\N,p\nmid n),
\end{equation}
where $\left\lceil\frac{x}{p}\right\rceil$ is defined as the $p$-adic
limit of $\left\lceil\frac{x_n}{p}\right\rceil$ as $x_n\to x$ in $\N_0$. (This function
is known as Dwork's shift map and it is also written by $x\mapsto x'$.)

Diamond \cite{Dia} defined his $p$-adic
analogue of the classical $\LG$-function, which we will write $\LD$,
by the Volkenborn integral
\begin{equation}
\label{DiamondDefinition}
\LD(x):=\int_{\Z_p}(x+t)\big(\log_p(x+t)-1\big)dt
\qquad(x\in\C_p\setminus\Z_p).
\end{equation}
Diamond showed that his function is locally analytic on $x\in\C_p\setminus\Z_p$
and that it satisfies the difference
equation \cite[Theorem 5]{Dia}
\begin{equation}
\label{DiamondDiff}
\LD(x+1)-\LD(x)=\log_p(x)\qquad\qquad(x\in\C_p\setminus\Z_p),
\end{equation}
in analogy to \eqref{ClassicDiff}, and the reflection formula \cite[Theorem 8]{Dia}
\begin{equation}
\label{diamondRef}
\LD(1-x)+\LD(x)=0 \qquad\qquad (x\in\C_p\setminus\Z_p).
\end{equation}
Diamond's function also satisfies the distribution formula 
\cite[Theorem 7]{Dia} \cite[Theorem 60.2.(iii)]{Sch1}
\begin{equation}
\label{distLD}
\sum_{0\le j<n}\LD\bigg(\frac{x+j}{n}\bigg)=\LD(x)-
\bigg(x-\frac12\bigg)\log_p(n)\quad(x\in\C_p\setminus\Z_p, n\in\N).
\end{equation}

The functions of Morita and Diamond are connected
by the distribution formula \cite[Prop.~11.5.17.(4)]{Coh}
\begin{equation}
\label{distLMLD}
\sum_{\substack{0\leq j<n\\
p\nmid(x+j)}}\LD\bigg(\frac{x+j}{n}\bigg)=\LM(x)-
\bigg(x-\left\lceil\frac{x}{p}\right\rceil\bigg)\log_p(n)\qquad(x\in\Z_p, p\!\mid\!n).
\end{equation}

Comparing formulas \eqref{VolkenMor}
and \eqref{DiamondDefinition}, we notice that
$\LM$ and $\LD$ have very similar expressions
involving a Volkenborn integral.
Equations \eqref{MorRef} and \eqref{diamondRef}
show that $\LM$ and $\LD$
have identical reflection formulas, and 
\eqref{MorDiff} and \eqref{DiamondDiff} show
that these functions satisfy similar
difference equations. Equation \eqref{distLMLD} hints at a
connection between these two functions.
Also, the domains of $\LM$ and $\LD$
are disjoint and complementary in $\C_p$. Finally,
we mention that if we could extend
$\LD$ to $\C_p$, then the difference equation
would force $\LD$ to be discontinuous
at either the positive integers or the negative integers. Since
both these sets are dense in $\Z_p$, $\LD$ cannot be
extended continuously to any point of $\Z_p$. In particular,
we do not obtain a continuous function on $\C_p$ by extending the domain of $\LD$ 
defining $\LD(x):=\LM(x)$ for $x\in\Z_p$.

In 2005, Tomokazu Kashio \cite{Ka} defined a $p$-adic $\LG$-function which combines
these of Morita and Diamond. His definition is actually very general and here we work with its
simplest case. We take the approach due to Diamond, that is, we work with locally analytic 
functions and with the Volkenborn integral. It is worth mentioning that Kashio's definition
involves the Volkenborn integral but without mentioning it.

Kashio's definition, in terms of the Volkenborn integral, is as follows. Define
 $\LP:\C_p\to\C_p$ by the Volkenborn integral
$$
\LP(x):=\int_{\Z_p}
(x+t)\big(\log_p(x+t)-1\big)\chi(x+t)dt,
$$
where $\chi$ is the characteristic function of the
complement of the open unit ball
$B(0;1^-)^\mathsf{c}:=\big\{x\in\C_p\,\big|\,\absp{x}\geq1\big\}$.
This function will be proved to be locally analytic on $\C_p$,
and to satisfy the difference equation (Proposition \ref{PropDifLP})
$$
\LP(x+1) - \LP(x)=\chi(x)\log_p(x)\qquad (x\in\C_p),
$$
as well as the reflection formula (Proposition \ref{PropRefLP})
$$
\LP(1-x)+ \LP(x)=0\qquad\qquad (x\in\C_p).
$$

On certain sub-domains of $\C_p$, $\LP$ coincides with Morita's
function, on other with Diamond's. Namely, in \S 5 we will show that
\begin{align}
\notag
\LP(x)&=\LM(x)\qquad\qquad(x\in\Z_p\text{ or }\absp{x}<1),\\
\notag
\LP(x)&=\LD(x)\qquad\qquad(\absp{x}>1).
\end{align}
For $\absp{x}=1$ the relation between $\LP$ and Diamond's function
is more subtle (see Propositions \ref{PropositionLPLD}
and \ref{CorLPLM}).

In \S4 we will prove a distribution formula for $\LP$
(Proposition \ref{theo.dist.general1}). 
Let $x\in\C_p$, and fix $n=mp^r\in\N$\,.
We define next a finite  sequence $x_i\in\C_p$
of length at most $r+1$. Write $x_0:=x$. 
If $x_0\notin\zcp$, or
$x_0\in\zcp$ but $\ol{x_0}\notin\F_p$, then the sequence stops. 
If $x_0\in\zcp$ and $\ol{x_0}\in\F_p$, then define
$$
x_1:=\frac{x_0+p-\ell(x_0)}{p}.
$$
We repeat the above process: if $x_j\notin\zcp$, or
$x_j\in\zcp$ but $\ol{x_j}\notin\F_p$, then the sequence stops. 
If $x_j\in\zcp$ and $\ol{x_j}\in\F_p$, then
$$
x_{j+1}:=\frac{x_j+p-\ell(x_j)}{p}.
$$
Let $s$ the least non negative integer, or $+\infty$,  such that $x_s\notin\zcp$, or 
$x_s\in\zcp$ but $\ol{x_s}\notin\F_p$, and set $\omega=\min(r,s)$.
Define our sequence to be $x_0,\dots,x_\omega$.
Notice that the length of the sequence is $\omega+1\le r+1$.
Finally, define $\RP:\C_p\to\C_p$ by
$$
\RP(x):=
\begin{cases}\displaystyle
\medskip
x-\frac{x}{p}+\frac{\al}p-\left\lceil\frac{\al}{p}\right\rceil
\quad &\text{if } 
\absp{x-\al}<1\text{ for some } \al\in\Z,\\
\medskip
x-\frac{1}{2}  \quad &\text{otherwise,}
\end{cases}
$$
where $\lceil c\rceil$ is the usual integer ceiling function for $c\in\Q$.
Our distribution formula is
\begin{equation}
\label{distLDintro}
\sum_{k=0}^{n-1}\LP\bigg(\frac{x+k}n\bigg)=
\sum_{j=0}^{\omega}\LP(x_j)\ - \ 
\log_p(n)\sum_{j=0}^{\omega}\RP(x_j).
\end{equation}

As we shall show in \S 5, the distribution formulas \eqref{distLM} and \eqref{distLD}
are now special cases of \eqref{distLDintro} for $x\in\Z_p$ and $\absp{x}>1$, respectively.
Also, we will see that formula \eqref{distLDintro} may be
viewed as a generalization of the restricted distribution formula \eqref{distLMLD},
in the sense that to be able to omit the restriction in sum in the left hand of \eqref{distLMLD},
then there must appear other terms in its right hand. 

The author wishes to thank the anonymous referee for pointing out the work of Kashio,
and for many suggestions and corrections to the original manuscript.

\section{Locally analytic functions and the Volkenborn integral}

Let $D=\{x\in\C_p\,\big|\,\absp{x-y}<r\}$ be an open ball in 
$\C_p$ with center $y\in D$
and with positive radius $r$.
We will call a function $f:D\to\C_p$
analytic on $D$ if $f$ can be represented by a power series
\begin{equation}
\notag
f(x)=\sum_{n=0}^{\infty}a_n(x-y)^n
\end{equation}
convergent for all $x\in D$, where $a_n\in\C_p$ for all $n\in\N_0$.
Now, let $A$ be any subset of $\C_p$. We will  call a function
$f:A\to\C_p$ locally analytic on $A$ if
for each $a\in A$ there is an open ball $D\subset A$ with positive radius,
that contains $a$, such that $f$ is analytic on $D$. It is easily seen that
we may replace the word open for the word closed in this definition.

The following result, due to Diamond \cite{Dia}, will allow us to define
Kashio's $p$-adic $\log\GC$-function in terms of the Volkenborn integral.
\begin{Proposition}
\label{PropDiamond}
Let $f:\C_p\to\C_p$ be a locally analytic function on $\C_p$.
 Then, for all $b\in\N$, the limit
\begin{equation}
\label{VolkenGeneral}
F(x):=\lim_{n\to\infty}\frac1{bp^n}\sum_{j=0}^{bp^n-1}f(x+j)
\end{equation}
exists and is independent of $b$. Moreover, $F(x)$ defines a locally
analytic function on $\C_p$, and we have the identity
\begin{equation}
\label{VolkenGenDif}
F'(x)=\lim_{n\to\infty}\frac1{bp^n}\sum_{j=0}^{bp^n-1}f'(x+j).
\end{equation}
\end{Proposition}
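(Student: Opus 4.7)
The plan is to fix a point $x_0\in\C_p$ and show that $F$ is analytic on a small closed ball around $x_0$; all three claims of the proposition then follow. The core observation is that local analyticity of $f$, combined with compactness of $x_0+\Z_p$ in $\C_p$, yields a uniform radius of analyticity: there is $N\in\N$ such that $f$ is analytic on each of the closed balls $D_a:=\{y\in\C_p:\absp{y-(x_0+a)}\leq p^{-N}\}$ for $a=0,1,\dots,p^N-1$, with a convergent power series expansion $f(x_0+a+u)=\sum_{k\geq 0}c_{a,k}u^k$ for $\absp{u}\leq p^{-N}$ and $\absp{c_{a,k}}p^{-Nk}\to 0$.

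Now fix $x=x_0+v$ with $\absp{v}\leq p^{-N}$ and $n\geq N$. Partitioning the range $\{0,\dots,bp^n-1\}$ by residue class modulo $p^N$, each $j$ writes uniquely as $j=a+p^Ns$ with $0\leq a<p^N$ and $0\leq s<bp^{n-N}$. Since $\absp{v+p^Ns}\leq p^{-N}$, substituting the expansion yields
\[
\frac{1}{bp^n}\sum_{j=0}^{bp^n-1}f(x+j)=\frac{1}{p^N}\sum_{a=0}^{p^N-1}\sum_{k\geq 0}c_{a,k}\,\frac{1}{bp^{n-N}}\sum_{s=0}^{bp^{n-N}-1}(v+p^Ns)^k.
\]
Expanding $(v+p^Ns)^k$ by the binomial theorem and using the standard fact that the moments $\frac{1}{bp^m}\sum_{s=0}^{bp^m-1}s^i$ converge to the $i$-th Bernoulli number $B_i$ as $m\to\infty$ independently of $b$ (and are uniformly bounded in $p$-adic absolute value), a tail estimate that exploits the decay of $\absp{c_{a,k}}p^{-Nk}$ justifies exchanging the infinite sum over $k$ with the limit $n\to\infty$. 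The outcome is a power series in $v$ convergent for $\absp{v}\leq p^{-N}$, whose coefficients do not depend on $b$. This simultaneously establishes existence, $b$-independence, and analyticity of $F$ on $D_0$; since $x_0$ was arbitrary, $F$ is locally analytic on $\C_p$.

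For the derivative formula, I apply the same construction to the locally analytic function $f'$ to produce a locally analytic $G$ defined by the right-hand side of \eqref{VolkenGenDif}. Its power series on $D_0$ is, by inspection of the coefficients $kc_{a,k}$, the term-by-term derivative of the one obtained above for $F$; since analytic functions on $\C_p$ may be differentiated term by term within their disk of convergence, $G=F'$ on $D_0$, and hence on all of $\C_p$. The main obstacle is the uniformity needed to exchange the infinite $k$-sum with the limit in $n$: the averaging factor $1/(bp^{n-N})$ inflates $p$-adic absolute values by $p^{n-N}$, so only the precise estimate $\absp{\frac{1}{bp^m}\sum_s s^i}\leq C$ (uniform in $m$, $i$, and $b$, ultimately traceable to von Staudt--Clausen for the limiting Bernoulli numbers) allows the $k$-th term to be dominated, independently of $n$, by a multiple of $\absp{c_{a,k}}p^{-Nk}\to 0$.
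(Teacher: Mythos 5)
Your argument is correct, but it is not the route the paper takes: the paper offers no proof at all for this proposition, simply citing Diamond (the Corollary on p.~324 for existence and $b$-independence, and his Theorem 3 for the derivative identity). What you have written is essentially a self-contained reconstruction of Diamond's argument: reduce to a uniform radius of local analyticity via compactness of $x_0+\Z_p$, split the sum over residue classes modulo $p^N$, and reduce everything to the monomial moments $\frac{1}{bp^m}\sum_{s=0}^{bp^m-1}s^i\to B_i$. All the key steps check out, including the dominated-convergence exchange of the $k$-sum with the $n$-limit and the identity $\binom{k}{i}(k-i)=k\binom{k-1}{i}$ that makes the series for $f'$ the term-by-term derivative of the series for $F$. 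The one place where you are slightly loose is the uniform bound on the \emph{finite} moment sums: von Staudt--Clausen only bounds the limits $B_i$. To bound $\absp{\frac{1}{bp^m}\sum_{s<bp^m}s^i}$ uniformly you need the Bernoulli-polynomial identity $\sum_{s=0}^{M-1}s^i=\frac{1}{i+1}\bigl(B_{i+1}(M)-B_{i+1}\bigr)$, which exhibits $\frac{1}{M}\sum_{s<M}s^i$ as $B_i+\sum_{l<i}\frac{1}{i+1}\binom{i+1}{l}B_lM^{i-l}$; the correction terms are controlled because $\absp{M}\le p^{-1}$ kills the denominators $\absp{\frac{1}{i+1-l}}$, giving a uniform bound of about $p$. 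You flag this as the main obstacle and point in the right direction, so I would count it as a detail to be filled in rather than a gap. Your version has the advantage of being self-contained and of making visible exactly where the local analyticity (as opposed to mere strict differentiability) is used; the paper's citation has the advantage of brevity and of inheriting Diamond's slightly more general statement.
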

\begin{proof}
The existence of \eqref{VolkenGeneral} is a special case of
\cite[p.~324, Corollary]{Dia}, and the identity \eqref{VolkenGenDif}
follows immediately from \cite[Theorem 3]{Dia}.
\end{proof}

If $g:\Z_p\to\C_p$ we say that
$g$ is Volkenborn integrable if the limit
$$
\int_{\Z_p} g(t)dt:=
\lim_{n\to\infty}\frac1{p^n}\sum_{j=0}^{p^n-1} g(j)
$$
exists, and we will call it the Volkenborn integral of $g$  \cite[$\S$V.5]{Rob} \cite[$\S$55]{Sch1}.
We can now restate Proposition \ref{PropDiamond} using the Volkenborn integral.

\begin{Lemma}
\label{LemmaVolken}
Let $f:\C_p\to\C_p$ be a locally analytic function on $\C_p$.
 Then the Volkenborn integral
\begin{equation}
\label{VolkenGeneral2}
F(x):=\int_{\Z_p}f(x+t)dt
\end{equation}
exists and $F(x)$ defines a locally
analytic function on $\C_p$. Moreover, we can differentiate
under the integral sign, that is
\begin{equation}
\notag 
F'(x)=\int_{\Z_p}f'(x+t)dt.
\end{equation}
\end{Lemma}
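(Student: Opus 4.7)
The plan is to derive Lemma \ref{LemmaVolken} directly from Proposition \ref{PropDiamond} by specializing $b=1$ and reinterpreting the resulting averaged-sum limit as a Volkenborn integral. First, I would observe that with $b=1$ the limit \eqref{VolkenGeneral} becomes
$$
\lim_{n\to\infty}\frac{1}{p^n}\sum_{j=0}^{p^n-1}f(x+j),
$$
which is precisely the Volkenborn integral $\int_{\Z_p}f(x+t)\,dt$ applied to the function $g(t)=f(x+t)$. Hence the existence assertion in Proposition \ref{PropDiamond} is exactly the statement that this Volkenborn integral exists for every $x\in\C_p$ and equals $F(x)$; moreover, the same proposition gives that $F$ is locally analytic on $\C_p$. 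That disposes of the first half of the lemma.

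For the differentiation identity, I would note that the derivative of a locally analytic function is locally analytic (termwise differentiation of the local power series expansions produces a power series with the same radius of convergence). Therefore $f'$ satisfies the hypotheses of Proposition \ref{PropDiamond}, and applying the existence part already proved to $f'$ yields that the Volkenborn integral $\int_{\Z_p}f'(x+t)\,dt$ exists and equals
$$
\lim_{n\to\infty}\frac{1}{p^n}\sum_{j=0}^{p^n-1}f'(x+j).
$$
Now specializing \eqref{VolkenGenDif} in Proposition \ref{PropDiamond} to $b=1$ identifies the right-hand side above with $F'(x)$, giving the desired formula $F'(x)=\int_{\Z_p}f'(x+t)\,dt$.

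There is essentially no obstacle here beyond a notational translation: the lemma is a repackaging of Proposition \ref{PropDiamond} in the language of the Volkenborn integral, so the only care required is to verify that the ``local analyticity'' hypothesis is preserved under differentiation so that the integral on the right of the differentiation formula actually makes sense as a Volkenborn integral rather than merely as an averaged-sum limit.
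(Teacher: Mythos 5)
Your proposal is correct and follows essentially the same route as the paper, which proves the lemma simply by setting $b=1$ in Proposition \ref{PropDiamond} and reading the averaged-sum limit as a Volkenborn integral. Your extra remark that $f'$ is again locally analytic (so the right-hand side is itself a genuine Volkenborn integral) is a reasonable, if minor, elaboration of the same argument.
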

\begin{proof}
This follows immediately from the definition of the
Volkenborn integral,
letting $b=1$ in Proposition \ref{PropDiamond}.
\end{proof}

\begin{Remark}
The Volkenborn integral is usually defined for strictly
differentiable functions \cite[\S V.1.1]{Rob} \cite[\S27]{Sch1}.
Let $X$ be any non empty subset of $\C_p$ with no isolated points
and let $f:X \to \C_p$. We say that $f$ is strictly differentiable
 at a point $a\in X$ if
\begin{equation}
\notag
\lim_{(x,y)\to(a,a)}\frac{f(x)-  f(y)}{x-y}
\end{equation}
exists, where we take the limit over $x,y \in X$ such that $x \not = y$.
We say that $f$ is strictly
differentiable on $X$, or that $f\in C^1(X)$, if $f$ is strictly differentiable
for all $a\in X$. If $f\in C^1(\Z_p)$, then the Volkenborn
integral
$$
\label{Volkenborn}
\int_{\Z_p} f(t)dt:=
\lim_{n\to\infty}\frac1{p^n}\sum_{j=0}^{p^n-1} f(j)
$$
of $f$ exists \cite[\S V.5.1]{Rob} \cite[\S55]{Sch1}. All the properties
of the Volkenborn integral
that we will mention from \cite{Cofr}, \cite{Rob} and \cite{Sch1} are proved there for strictly
differentiable functions.
Since any locally analytic function on an open set $X\subset\C_p$
is  strictly differentiable on $X$ \cite[Corollary 29.11]{Sch1},
these properties hold for locally analytic functions on $X$.
\end{Remark}

Perhaps the simplest non trivial property of a function $F$ defined by
\eqref{VolkenGeneral2} is that it satisfies the difference equation
\cite[Theorem 4]{Dia} \cite[Prop.~55.5]{Sch1}
\begin{equation}
\label{VolkenbornDiff}
F(x+1)-F(x)=f'(x)
\qquad\qquad (x\in\C_p).
\end{equation}

We will also need the following ``distribution'' and ``integration by parts'' formulas.

\begin{Lemma}
\label{LemmaDist}
Let $f:\C_p\to\C_p$ be a locally analytic function on $\C_p$. Then for all
$N\in\N$
\begin{equation}
\notag
\int_{\Z_p}f(t)dt = \frac1N\sum_{k=0}^{N-1}\int_{\Z_p}f(k+Nt)dt.
\end{equation}
\end{Lemma}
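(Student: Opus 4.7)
The plan is to apply Proposition \ref{PropDiamond} with the parameter $b=N$, and then reorganize the resulting Riemann-type sum by splitting indices according to their residue modulo $N$.

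First, I would use Proposition \ref{PropDiamond} at $x=0$ with $b=N$ to write
$$
\int_{\Z_p}f(t)dt = \lim_{n\to\infty}\frac1{Np^n}\sum_{j=0}^{Np^n-1}f(j),
$$
noting that the right-hand side is independent of $b$ and agrees with the Volkenborn integral obtained from $b=1$.

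Next, I would parametrize each $j\in\{0,1,\dots,Np^n-1\}$ uniquely as $j=k+Ni$ with $0\le k<N$ and $0\le i<p^n$, which yields
$$
\frac1{Np^n}\sum_{j=0}^{Np^n-1}f(j)
= \frac1N\sum_{k=0}^{N-1}\frac1{p^n}\sum_{i=0}^{p^n-1}f(k+Ni).
$$
For each fixed $k$, the function $t\mapsto f(k+Nt)$ is locally analytic on $\C_p$ (being a composition of the affine map $t\mapsto k+Nt$ with the locally analytic $f$), so Lemma \ref{LemmaVolken} guarantees that its Volkenborn integral exists, meaning that the inner limit $\lim_{n\to\infty}\frac1{p^n}\sum_{i=0}^{p^n-1}f(k+Ni)$ equals $\int_{\Z_p}f(k+Nt)dt$.

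Finally, since the sum over $k$ is a finite sum of $N$ terms, I can interchange the limit in $n$ with the sum over $k$ to conclude
$$
\int_{\Z_p}f(t)dt = \frac1N\sum_{k=0}^{N-1}\int_{\Z_p}f(k+Nt)dt.
$$
There is no substantial obstacle here: the only point requiring care is justifying the use of Proposition \ref{PropDiamond} with $b=N$ rather than $b=1$, which is precisely what that proposition is designed to allow. Everything else is a bookkeeping rearrangement of a finite sum followed by passage to the limit.
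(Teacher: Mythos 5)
Your proof is correct, but it takes a genuinely different route from the paper, which offers no argument of its own and simply cites \cite[\S 55]{Sch1}, where the distribution relation is proved for strictly differentiable functions (the Remark following Lemma \ref{LemmaVolken} is what makes that citation applicable here). Your version is a self-contained derivation that stays entirely inside the paper's locally analytic framework: the bijection $j=k+Ni$ with $0\le k<N$, $0\le i<p^n$ is pure bookkeeping, and the two analytic inputs you invoke are exactly the right ones. First, Lemma \ref{LemmaVolken} applied to the locally analytic function $t\mapsto f(k+Nt)$ guarantees that each inner limit exists and equals $\int_{\Z_p}f(k+Nt)\,dt$, after which interchanging the limit in $n$ with the finite sum over $k$ is harmless. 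Second, and crucially, the $b$-independence in Proposition \ref{PropDiamond} with $b=N$ is what identifies $\lim_{n\to\infty}\frac1{Np^n}\sum_{j=0}^{Np^n-1}f(j)$ with the Volkenborn integral even when $N$ is not a power of $p$ (and it covers the case $p\mid N$ as well, since the statement holds for every $b\in\N$); without that proposition this identification would be the one nontrivial point. In short, your approach buys independence from the external reference at the cost of leaning on Diamond's averaging result, which is precisely what that proposition was stated for.
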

\begin{proof}
See \cite[\S55]{Sch1}.
\end{proof}

\begin{Lemma}
\label{LemmaRaabe}
Let $f$ and $F$ be related as in Lemma \ref{LemmaVolken}. Then we
have the identity
\begin{equation}
\label{PreRaabe}
\int_{\Z_p}F(x+t)dt = F(x)+(x-1)F'(x)- \int_{\Z_p}
(x+t)f'(x+t)dt,
\end{equation}
valid for all $x\in\C_p$.
\end{Lemma}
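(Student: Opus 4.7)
The plan is to compute the Volkenborn integral on the left directly from its definition as a Riemann-type limit and then use the difference equation \eqref{VolkenbornDiff} to replace $F$-values by partial sums of $f'$. Setting
$$
S_n(x):=\frac{1}{p^n}\sum_{j=0}^{p^n-1}F(x+j),
$$
one has $S_n(x)\to\int_{\Z_p}F(x+t)\,dt$. Iterating the difference equation gives $F(x+j)=F(x)+\sum_{i=0}^{j-1}f'(x+i)$, and after swapping the order of summation in the resulting double sum one obtains
$$
S_n(x)\ =\ F(x)\ +\ U_n(x)\ -\ \frac{1}{p^n}\sum_{i=0}^{p^n-1}(i+1)\,f'(x+i),\qquad U_n(x):=\sum_{i=0}^{p^n-1}f'(x+i).
$$

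Next I would split the remaining Cesaro-type sum via the algebraic identity $i+1=(x+i)-(x-1)$, rewriting it as
$$
\frac{1}{p^n}\sum_{i=0}^{p^n-1}(x+i)f'(x+i)\ -\ (x-1)\,\frac{U_n(x)}{p^n}.
$$
Since $f'$, and hence $t\mapsto(x+t)f'(x+t)$, is locally analytic on $\C_p$, Lemma \ref{LemmaVolken} implies that the first piece tends to $\int_{\Z_p}(x+t)f'(x+t)\,dt$; and $U_n(x)/p^n\to F'(x)$ by the very definition of the Volkenborn integral of $f'$.

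The only delicate point, and the main obstacle, is that $U_n(x)$ itself --- an unnormalised sum of $p^n$ terms --- tends to $0$ in $\C_p$. This is where the ultrametric is crucial: from $U_n(x)/p^n\to F'(x)$ one deduces
$$
\bigl|U_n(x)-p^nF'(x)\bigr|_p\ =\ p^{-n}\,\bigl|U_n(x)/p^n-F'(x)\bigr|_p\ \longrightarrow\ 0,
$$
while obviously $|p^nF'(x)|_p=p^{-n}|F'(x)|_p\to 0$, so the strong triangle inequality forces $|U_n(x)|_p\to 0$. Plugging the four resulting limits back into the expression for $S_n(x)$ gives
$$
\int_{\Z_p}F(x+t)\,dt\ =\ F(x)\ +\ (x-1)F'(x)\ -\ \int_{\Z_p}(x+t)f'(x+t)\,dt,
$$
which is precisely \eqref{PreRaabe}.
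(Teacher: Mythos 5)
Your argument is correct. Note that the paper does not actually prove this lemma: its ``proof'' is a one-line citation to \cite[Lemma 2.2]{Cofr}, so you are supplying a self-contained derivation where the author defers to the literature. Your route --- telescoping the difference equation \eqref{VolkenbornDiff} to get $F(x+j)=F(x)+\sum_{i=0}^{j-1}f'(x+i)$, swapping the order of summation to produce the weight $p^n-1-i$, splitting $i+1=(x+i)-(x-1)$, and identifying the three surviving Riemann-type sums as $\int_{\Z_p}(x+t)f'(x+t)\,dt$, $(x-1)F'(x)$ and the unnormalised sum $U_n(x)$ --- is the natural elementary proof, and all the limits you invoke are justified by Lemma \ref{LemmaVolken} applied to the locally analytic functions $f'$ and $y\mapsto yf'(y)$. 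You correctly isolate the one genuinely $p$-adic step, namely that $U_n(x)=p^n\cdot\bigl(U_n(x)/p^n\bigr)\to 0$ because $U_n(x)/p^n$ converges (to $F'(x)$) while $\absp{p^n}\to 0$; this is exactly where the argument would fail archimedeanly and where the extra term $\frac{n-1}{2}\log(2\pi)$ of the classical Raabe/distribution formulas disappears in the $p$-adic setting. The only cosmetic remark is that your two-step estimate via $\absp{U_n(x)-p^nF'(x)}$ can be shortened to the observation that a convergent sequence is bounded, so $\absp{U_n(x)}\le p^{-n}\sup_m\absp{U_m(x)/p^m}\to 0$; but as written it is also correct.
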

\begin{proof}
See \cite[Lemma 2.2]{Cofr}.
\end{proof}


\section{Kashio's $p$-adic log-gamma function}

Let $\varphi_p:\C_p\to\C_p$ be the function defined by
\begin{equation}
\label{phiDef1}
\varphi_p(x):=
\begin{cases}\displaystyle
 0 \quad &\text{if }\absp{x}<1, \\
 x\log_p(x)-x \quad &\text{if } \absp{x}\geq1.
\end{cases}
\end{equation}
If we call $\chi$ the characteristic function of the set
$B(0;1^-)^\mathsf{c}=\big\{x\in\C_p\,\big|\,\absp{x}\geq1\big\}$,
we can write
\begin{equation}
\notag
\varphi_p(x)=x\big(\log_p(x)-1\big)\chi(x).
\end{equation}
Since the open ball $B(0;1^-)$
is also closed, its complement in $\C_p$ is open, so in
\eqref{phiDef1} we have defined $\varphi_p$ by its restriction to disjoint open
sets. Now, the null function is trivially analytic on $\C_p$, and so is the
identity function. Also, $\log_p(x)$ is locally
analytic on $B(0;1^-)^\mathsf{c}$, in fact on $\C_p^*$\,
\cite[Prop.~45.7]{Sch1}. Thus
the function $x\log_p(x)-x$ is also locally analytic on
the open set $B(0;1^-)^\mathsf{c}$. Hence,
the function $\varphi_p$ is locally analytic on $\C_p$. Therefore,
by Lemma \ref{LemmaVolken}, the following definition makes sense.

\begin{Def}
\label{DefLP}
With notation as above,
define the 
function \nolinebreak$\LP:\C_p\to\C_p$ by the Volkenborn
integral
\begin{equation}
\notag 
\LP(x):=\int_{\Z_p}\varphi_p(x+t)dt.
\end{equation}
\end{Def}

Hence, we can write
$$
\LP(x)=\int_{\Z_p}
(x+t)\big(\log_p(x+t)-1\big)\chi(x+t)dt,
$$
and by Lemma \ref{LemmaVolken}, $\LP$ is locally analytic on $\C_p$.

\begin{Remark} 
This is Kashio's $p$-adic $\LG$-function $L\Gamma_{p,1}(x,(1))$ 
(see \cite[eq.~5.12]{Ka}). His construction 
is much more general. He works with a multiple $p$-adic Hurwitz zeta-function, and he defines 
his multiple  $p$-adic $\LG$-function by means of the derivative at zero of this $p$-adic
Hurwitz zeta-function, as in the complex case. 
\end{Remark}

The simplest property of $\LP$ is its
difference equation.
\begin{Proposition}
\label{PropDifLP}
For all $x\in\C_p$ we have the difference equation
\begin{equation}
\notag
\LP(x+1) - \LP(x)=\chi(x)\log_p(x).
\end{equation}
\end{Proposition}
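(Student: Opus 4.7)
The plan is to apply the general Volkenborn difference equation \eqref{VolkenbornDiff} directly to the integrand $\varphi_p$. Since $\LP(x)=\int_{\Z_p}\varphi_p(x+t)\,dt$ by Definition \ref{DefLP}, and $\varphi_p$ was shown in \S 3 to be locally analytic on $\C_p$, formula \eqref{VolkenbornDiff} gives at once
$$
\LP(x+1)-\LP(x)=\varphi_p'(x).
$$
Thus the problem reduces to showing $\varphi_p'(x)=\chi(x)\log_p(x)$ for every $x\in\C_p$.

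To compute the derivative, I would use the piecewise description \eqref{phiDef1}. The key observation is that both $B(0;1^-)$ and its complement $B(0;1^-)^\mathsf{c}$ are open in $\C_p$, so every point lies in the interior of exactly one of the two pieces and the derivative can be computed locally with no boundary condition to verify. On $B(0;1^-)$, $\varphi_p$ is identically $0$, so $\varphi_p'(x)=0=\chi(x)\log_p(x)$ there. On $B(0;1^-)^\mathsf{c}\subset\C_p^*$, the $p$-adic logarithm $\log_p$ is locally analytic with $\log_p'(x)=1/x$ (see \cite[Prop.~45.7]{Sch1}), so the product rule applied to $\varphi_p(x)=x\log_p(x)-x$ gives
$$
\varphi_p'(x)=\log_p(x)+x\cdot\tfrac{1}{x}-1=\log_p(x)=\chi(x)\log_p(x).
$$

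Combining the two cases yields $\varphi_p'(x)=\chi(x)\log_p(x)$ on all of $\C_p$, which together with the preceding display proves the proposition. There is no real obstacle in this argument; the only point worth highlighting is that the openness of both pieces in the definition of $\varphi_p$ is what lets us differentiate piecewise without any matching condition, and this openness was already used in \S 3 when establishing that $\varphi_p$ is locally analytic on $\C_p$.
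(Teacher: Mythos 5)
Your proposal is correct and follows exactly the paper's own route: apply the Volkenborn difference equation \eqref{VolkenbornDiff} to $f=\varphi_p$ and observe that $\varphi_p'(x)=\chi(x)\log_p(x)$, which the paper states as \eqref{Derphi} without spelling out the piecewise computation you supply. The extra detail about the openness of both pieces is a welcome clarification but does not change the argument.
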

\begin{proof}
This follows from \eqref{VolkenbornDiff}, noticing that
\begin{equation}
\label{Derphi}
\ {\varphi_p}'(x)=\chi(x)\log_p(x),
\end{equation}
where $\chi$ is the characteristic function of the set
$B(0;1^-)^\mathsf{c}=\big\{x\in\C_p\,\big|\,\absp{x}\geq1\big\}$.
\end{proof}
Kashio proved the above formula in \cite[Lemma 5.5]{Ka} but with a mistake. He claims that
$\LP(x+1) - \LP(x)=\log_p(x)$, that is, he omits the factor $\chi(x)$.

The function $\LP$ satisfies a Raabe-type formula and
a characterization theorem similar to the ones satisfied by 
Diamond's $\LD$ and Morita's $\LM$ \cite[p.~364]{Cofr}.
\begin{Theorem}
\label{TheoRaabeLP}
The function $\LP$ satisfies
\begin{equation}
\label{LPRaabe}
\int_{\Z_p}\LP(x+t)dt = (x-1)
\LP'(x)-\rp(x)\qquad(x\in\C_p),
\end{equation}
where $\rp:\C_p\to\C_p$ is defined by the Volkenborn integral
\begin{equation}
\label{RPDef}
\rp(x):=\int_{\Z_p}(x+t)\chi(x+t)dt.
\end{equation}
Moreover, $\LP$ is
the unique locally analytic function
$f:\C_p\to\C_p$ satisfying the difference equation
\begin{equation}
\notag
f(x+1) - f(x)=\chi(x)\log_p(x)
\end{equation}
 and the Volkenborn integro-differential equation
\begin{equation}
\notag
\int_{\Z_p}f(x+t)dt = (x-1)f'(x)-\rp(x).
\end{equation}
\end{Theorem}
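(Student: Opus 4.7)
The plan is to obtain the Raabe-type identity \eqref{LPRaabe} as a direct consequence of Lemma \ref{LemmaRaabe} after rearranging the defining integral of $\LP$, and then to establish uniqueness by collapsing the integro-differential equation for $g := f - \LP$ to the elementary constraint $g' \equiv 0$.

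For \eqref{LPRaabe}, I apply Lemma \ref{LemmaRaabe} with $f = \varphi_p$ and $F = \LP$ (both locally analytic on $\C_p$). This yields
$$
\int_{\Z_p}\LP(x+t)dt = \LP(x)+(x-1)\LP'(x) - \int_{\Z_p}(x+t)\varphi_p'(x+t)dt.
$$
Using $\varphi_p'(x) = \chi(x)\log_p(x)$ from \eqref{Derphi}, the last integral is $\int_{\Z_p}(x+t)\chi(x+t)\log_p(x+t)dt$. But the defining formula for $\LP$ can be rewritten as
$$
\int_{\Z_p}(x+t)\chi(x+t)\log_p(x+t)dt = \LP(x)+\rp(x),
$$
by splitting $\varphi_p(x+t) = (x+t)\chi(x+t)\log_p(x+t) - (x+t)\chi(x+t)$ and recognizing the second piece as $\rp(x)$. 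Substituting this back cancels $\LP(x)$ and produces \eqref{LPRaabe}.

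For uniqueness, let $f:\C_p \to \C_p$ be any locally analytic solution and set $g := f - \LP$. Then $g$ is locally analytic, satisfies $g(x+1) = g(x)$ for all $x \in \C_p$, and obeys $\int_{\Z_p}g(x+t)dt = (x-1)g'(x)$. Iterating the periodicity gives $g(x+j) = g(x)$ for every $j \in \N_0$; since $t \mapsto g(x+t)$ is continuous on $\Z_p$ and $\N_0$ is dense in $\Z_p$, I conclude $g(x+t) = g(x)$ for all $t \in \Z_p$. Hence the Volkenborn integral collapses to $g(x)$, leaving the identity $g(x) = (x-1)g'(x)$.

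Finally, I apply this identity at $x+1$ and invoke both $g(x+1) = g(x)$ and (by differentiating the periodicity) $g'(x+1) = g'(x)$ to obtain $g(x) = x\,g'(x)$. Subtracting the two forms of the equation yields $g'(x) = 0$ for all $x \in \C_p$, hence $g \equiv 0$ and $f = \LP$. The main conceptual step is recognizing that a period-one locally analytic function has its Volkenborn integral equal to itself, which reduces the integro-differential equation to a trivial first-order linear equation.
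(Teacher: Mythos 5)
Your derivation of \eqref{LPRaabe} is exactly the paper's: apply Lemma \ref{LemmaRaabe} with $f=\varphi_p$, use \eqref{Derphi}, and split $(x+t)\chi(x+t)\log_p(x+t)=\varphi_p(x+t)+(x+t)\chi(x+t)$ so that the remaining integral is $\LP(x)+\rp(x)$. For the uniqueness claim the paper simply defers to \cite{Cofr}; your argument (periodicity of $g=f-\LP$, density of $\N_0$ in $\Z_p$ to get $g(x+t)=g(x)$, collapse of the Volkenborn integral to $g(x)=(x-1)g'(x)$, and comparison at $x$ and $x+1$ to force $g'\equiv0$ and hence $g\equiv0$) is precisely the standard argument from that reference and is correct, so you have supplied in full a step the paper only cites.
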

\begin{proof}
First we prove formula \eqref{LPRaabe}.
Using \eqref{PreRaabe} and \eqref{Derphi} we have
\begin{align}
\notag
\int_{\Z_p}\LP(x+t)dt &=\LP(x) +(x-1)\LP'(x)\\
\notag &\qquad  -\int_{\Z_p}(x+t)\log_p(x+t)\chi(x+t)dt\\
\notag &=\LP(x) +(x-1)\LP'(x)\\
\notag &\qquad-\LP(x)-\int_{\Z_p}(x+t)\chi(x+t)dt\\
\notag &=(x-1)\LP'(x)-\rp(x).
\end{align}

The uniqueness claim is proved exactly as in \cite{Cofr}.

\end{proof}

There is a reflection formula for $\LP$.
\begin{Proposition}
\label{PropRefLP}
For all $x\in\C_p$ we have the reflection formula
\begin{equation}
\notag
\LP(1-x) + \LP(x)=0.
\end{equation}
\end{Proposition}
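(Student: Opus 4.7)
The plan is to derive the reflection formula from two ingredients: the oddness of $\varphi_p$ and a standard reflection identity for the Volkenborn integral.

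First I would verify that $\varphi_p(-x)=-\varphi_p(x)$ for every $x\in\C_p$. When $\absp{x}<1$ both sides vanish by definition, while when $\absp{x}\ge 1$, using the fact $\log_p(-1)=0$ for the Iwasawa logarithm, one computes
$$\varphi_p(-x) \;=\; (-x)\bigl(\log_p(-x)-1\bigr) \;=\; -x\log_p(x)+x \;=\; -\varphi_p(x).$$
Next I would invoke the standard Volkenborn reflection identity: for every $g$ that is $C^1$ on $\Z_p$,
$$\int_{\Z_p} g(-t)\,dt \;=\; \int_{\Z_p} g(t+1)\,dt,$$
equivalently $\int_{\Z_p} g(-t)\,dt=\int_{\Z_p} g(t)\,dt + g'(0)$; this is in Schikhof~\S55 and Robert~\S V.5. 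The identity applies in the situation below because, for each fixed $x\in\C_p$, the function $g(t):=\varphi_p(x+t)$ is locally analytic on $\C_p$ and therefore $C^1$ on $\Z_p$.

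With these two ingredients the proof is short. By oddness,
$$\LP(-x) \;=\; \int_{\Z_p}\varphi_p(-x+t)\,dt \;=\; -\int_{\Z_p}\varphi_p(x-t)\,dt \;=\; -\int_{\Z_p} g(-t)\,dt,$$
and by the reflection identity the last integral equals
$$\int_{\Z_p} g(t+1)\,dt \;=\; \int_{\Z_p}\varphi_p\bigl((x+1)+t\bigr)\,dt \;=\; \LP(x+1).$$
Hence $\LP(-x)+\LP(x+1)=0$, and substituting $x\mapsto x-1$ yields the claimed identity $\LP(1-x)+\LP(x)=0$.

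The only real obstacle is the Volkenborn reflection identity itself; it is classical but not stated in \S2 of the paper, so I would either cite Schikhof/Robert or prove it in one line from the difference equation \eqref{VolkenbornDiff} applied at $x=0$ together with the substitution $j\mapsto p^n-1-j$ in the partial sums of Proposition \ref{PropDiamond}. A tempting alternative would be to apply the uniqueness clause of Theorem \ref{TheoRaabeLP} to $x\mapsto-\LP(1-x)$, but verifying the Raabe-type integro-differential equation for this reflected function demands essentially the same reflection identity to rewrite $\int_{\Z_p}\LP(1-x-t)\,dt$, so the direct computation above is more economical.
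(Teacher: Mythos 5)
Your proof is correct and complete: the oddness of $\varphi_p$ (using $\log_p(-1)=0$) combined with the Volkenborn reflection identity $\int_{\Z_p}g(-t)\,dt=\int_{\Z_p}g(t)\,dt+g'(0)=\int_{\Z_p}g(t+1)\,dt$ gives $\LP(-x)+\LP(x+1)=0$, which is the claim after shifting $x$. The paper's own proof is just a citation of \cite[Prop.~2.5]{Cofr}, and the argument there is exactly this one, so your proposal supplies the details of the intended proof rather than a different route.
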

\begin{proof}
Follows exactly as in \cite[Prop.~2.5]{Cofr}.
\end{proof}

The function $\rp$ defined by \eqref{RPDef} can be computed explicitly.

\begin{Proposition}
\label{PropValueRP}
For $x\in\C_p$ we have 
$$
\rp(x)=\RP(x):=
\begin{cases}\displaystyle
\medskip
x-\frac{x}{p}+\frac{\al}p-\left\lceil\frac{\al}{p}\right\rceil
\quad &\text{if } 
\absp{x-\al}<1\text{ for some } \al\in\Z,\\
\medskip
x-\frac{1}{2}  \quad &\text{otherwise,}
\end{cases}
$$
where $\lceil c\rceil$ is the usual integer ceiling function for $c\in\Q$.
\end{Proposition}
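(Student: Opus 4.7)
The plan is to evaluate $\rp(x)$ directly from its definition, splitting into the two cases appearing in the statement of $\RP$ and identifying exactly where the cutoff $\chi(x+t)$ fails to equal $1$ as $t$ runs over $\Z_p$. The only nontrivial Volkenborn inputs I need are linearity of the integral, the elementary value $\int_{\Z_p} t\,dt=-\frac{1}{2}$, and the distribution identity of Lemma \ref{LemmaDist}.

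First I would dispose of the ``otherwise'' case. Assume $\absp{x-\al}\ge 1$ for every $\al\in\Z$. Either $\absp{x}>1$, in which case $\absp{x+t}=\absp{x}$ for every $t\in\Z_p$, or $x\in\zcp$ with $\ol{x}\notin\F_p$, in which case $\overline{x+t}=\ol{x}+\ol{t}$ is nonzero since $\ol{t}\in\F_p$ while $\ol{x}\notin\F_p$, whence $\absp{x+t}=1$. Either way $\chi(x+t)\equiv 1$ on $\Z_p$, so by linearity
\begin{equation*}
\rp(x)=\int_{\Z_p}(x+t)\,dt = x-\frac{1}{2}.
\end{equation*}

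For the remaining case, fix $\al\in\Z$ with $\absp{x-\al}<1$, so $\ol{x}=\ol{\al}$. I would apply Lemma \ref{LemmaDist} with $N=p$ to the locally analytic integrand $(x+t)\chi(x+t)$ to obtain
\begin{equation*}
\rp(x) = \frac{1}{p}\sum_{k=0}^{p-1}\int_{\Z_p}(x+k+pt)\chi(x+k+pt)\,dt.
\end{equation*}
For each $k\in\{0,\dots,p-1\}$ the residue of $x+k+pt$ equals $\ol{\al+k}$, and there is a unique $k_0\in\{0,\dots,p-1\}$ with $k_0\equiv -\al\pmod p$. Hence $\chi(x+k+pt)=1$ for $k\ne k_0$ (nonzero residue forces $\absp{x+k+pt}=1$), while $\chi(x+k_0+pt)=0$ (since $x+k_0+pt\in p\zcp$). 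Each surviving integral equals $x+k-\frac{p}{2}$, and using $\sum_{k=0}^{p-1}k=\frac{p(p-1)}{2}$, collecting terms yields
\begin{equation*}
\rp(x)=x-\frac{x}{p}-\frac{k_0}{p}.
\end{equation*}

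The last step, and the only real bookkeeping obstacle, is to match this with the claimed formula by showing $k_0=p\lceil\al/p\rceil-\al$. Since $k_0$ is the unique element of $\{0,\dots,p-1\}$ with $p\mid \al+k_0$, the integer $\al+k_0$ is the smallest multiple of $p$ that is $\ge\al$, which by the definition of the ceiling equals $p\lceil \al/p\rceil$; a brief case check separating $\al>0$, $\al=0$, and $\al<0$ is prudent. To confirm the formula is well-defined I would also note that the condition $\absp{x-\al}<1$ determines $\al$ only modulo $p$, but the expression $\al/p-\lceil\al/p\rceil$ is manifestly invariant under $\al\mapsto\al+p$.
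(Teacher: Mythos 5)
Your proof is correct, but it is organized differently from the paper's. The paper computes $\rp(x)$ straight from the limit definition of the Volkenborn integral: it splits the Riemann-type sum over $0\le j<p^n$ according to whether $\absp{x+j}<1$, normalizes $\al$ to $\{0,\dots,p-1\}$ at the outset, and reindexes the subsum over multiples of $p$ (this reindexing is where $\lceil\al/p\rceil$ enters). You instead invoke Lemma \ref{LemmaDist} with $N=p$ to break $\rp(x)$ into $p$ integrals over residue classes, observe that $\chi$ vanishes identically on exactly one class and equals $1$ on the others, and evaluate each surviving integral from $\int_{\Z_p}t\,dt=-\tfrac12$; all the combinatorics is then concentrated in the single integer identity $k_0=p\lceil\al/p\rceil-\al$, and you keep $\al$ general while checking invariance under $\al\mapsto\al+p$. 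Your route is arguably cleaner — it reuses a lemma the paper already has on hand (though the paper only deploys it later, for the distribution formula) and makes transparent that the deviation from $x-\tfrac12$ is exactly the lost contribution of one residue class — while the paper's is self-contained at the level of limits. Two small points: your case analysis in the ``otherwise'' branch ($\absp{x}>1$, or $x\in\zcp$ with $\ol{x}\notin\F_p$) is equivalent to the paper's but should note explicitly that $\absp{x}\le1$ with $\absp{x-\al}\ge1$ for all $\al\in\Z$ forces $\ol{x}\notin\F_p$ (otherwise $\al=\ell(x)$ would work); and the parenthetical ``since $x+k_0+pt\in p\zcp$'' overstates what you know — from $\absp{x-\al}<1$ you only get $\absp{x+k_0+pt}\le\max\bigl(\absp{x-\al},1/p\bigr)<1$, not membership in $p\zcp$, but that strict inequality is all that is needed for $\chi(x+k_0+pt)=0$, so the argument stands.
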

\begin{Remark}

One easily checks that $\absp{x-\al}<1$ implies that $\absp{x}\le1$, 
and that $\frac{\al}p-\left\lceil\frac{\al}{p}\right\rceil$
only depends on $\al$ modulo $p$.
\end{Remark}
\begin{proof}
We begin the proof  with the easy case, which is when $\absp{x}>1$.
If $\absp{t}\le1$ then
$\absp{x+t}=\absp{x}>1$. Thus, $\chi(x+j)=1$
for all $j\in\N_0$ and
\begin{align}
\notag
\rp(x)&=\int_{\Z_p}(x+t)\chi(x+t)dt
=\lim_{n\to\infty}\frac1{p^n}
\sum_{j=0}^{p^n-1}(x+j)\chi(x+j)\\
\notag &=\lim_{n\to\infty}\frac1{p^n}
\sum_{j=0}^{p^n-1}x\;+\;
\lim_{n\to\infty}\frac1{p^n}
\sum_{j=0}^{p^n-1}j
=x+\lim_{n\to\infty}\frac{p^n-1}{2}=x-\frac12.
\end{align}

Now, suppose that $\absp{x}\leq1$. Then
\begin{align}
\notag
\rp(x)&=\int_{\Z_p}(x+t)\chi(x+t)dt\\
\notag &=\lim_{n\to\infty}\frac1{p^n}
\sum_{0\le j<p^n}(x+j)\chi(x+j)\\
\notag &=\lim_{n\to\infty}\frac1{p^n}
\sum_{\substack{0\le j<p^n\\
\absp{x+j}=1}}(x+j)\\
\notag &=\lim_{n\to\infty}\frac1{p^n}
\sum_{0\le j<p^n}(x+j)\,
-\lim_{n\to\infty}\frac1{p^n}
\sum_{\substack{0\le j<p^n\\
\absp{x+j}<1}}(x+j)\\
\label{temp1} &=x-\frac12\,
-\lim_{n\to\infty}\frac1{p^n}
\sum_{\substack{0\leq j<p^n\\
\absp{x+j}<1}}(x+j).
\end{align}
In the last sum above, if there is no $j$ such that $\absp{x+j}<1$, 
this sum is 0. In this case we also have $\rp(x)=x-1/2$.
The remaining case is when
$\absp{x}\le1\text{ and }
\absp{x-\al}<1$ for some integer $\al$, which we
may choose to satisfy $0\leq\al\leq p-1$.
Then, in the sum in \eqref{temp1}, the condition $\absp{x+j}<1$
is equivalent to $\absp{\al+j}<1$.
Since $\al+j\in\N_0$, this is equivalent to the simpler
condition $p\mid(\al+j)$. Hence we have
\begin{align}
\notag
\sum_{\substack{0\leq j<p^n\\
\absp{x+j}<1}}(x+j) &=\sum_{\substack{0\leq j<p^n\\
p\mid(\al+j)}}(x-\al+\al+j)=\sum_{\substack{\al\leq i<\al+p^n\\
p\mid i}}(x-\al+i)\\
\notag &=\sum_{\frac\al{p}\leq j<\frac\al{p}+p^{n-1}}(x-\al+pj)
=\sum_{\left\lceil\frac{\al}{p}\right\rceil\le j <
\left\lceil\frac{\al}{p}\right\rceil+p^{n-1}}(x-\al+pj)\\
\notag &=\sum_{0\leq i<p^{n-1}}(x-\al+pi+p\left\lceil\frac{\al}{p}\right\rceil)
=p^{n-1}(x-\al)+p^n\left\lceil\frac{\al}{p}\right\rceil+
p\sum_{0\leq i<p^{n-1}}i.
\end{align}
Replacing this in \eqref{temp1} we obtain
\begin{align}
\notag
\rp(x)&=x-\frac12\,
-\lim_{n\to\infty}\frac1{p^n}
\bigg(p^{n-1}(x-\al)+p^n\left\lceil\frac{\al}{p}\right\rceil+
p\sum_{0\leq i<p^{n-1}}i\bigg)\\
\notag &=x-\frac12-\frac{x}p+\frac{\al}p-
\left\lceil\frac{\al}{p}\right\rceil-
\lim_{n\to\infty}\frac1{p^{n-1}}\sum_{0\leq i<p^{n-1}}i\\
\notag &=x-\frac{x}{p}+\frac{\al}p-\left\lceil\frac{\al}{p}\right\rceil.
\end{align}
\end{proof}

\begin{Corollary}
\label{CorValueRP}
For $x\in\Q_p$ we have
$$
\rp(x)=\RP(x)=
\begin{cases}\displaystyle
\medskip
x-\left\lceil\frac{x}{p}\right\rceil
\quad &\text{if } 
x\in\Z_p,\\
\medskip
x-\frac{1}{2}  \quad &\text{otherwise,}
\end{cases}
$$
where $\left\lceil\frac{x}{p}\right\rceil$ is defined as the $p$-adic
limit of $\left\lceil\frac{x_n}{p}\right\rceil$ as $x_n\to x$ in $\N_0$.
\end{Corollary}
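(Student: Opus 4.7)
The plan is to derive the corollary as a direct specialization of Proposition \ref{PropValueRP}, checking two cases according as $x \in \Z_p$ or not. The essential observation is that the integer $\alpha$ appearing in Proposition \ref{PropValueRP} is only defined up to congruence mod $p$ (as the remark notes), and when $x \in \Q_p$ the formula for $\lceil x/p\rceil$ can be read off by approximating $x$ by natural numbers in the same residue class.

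First I would dispose of the case $x \in \Q_p \setminus \Z_p$. Here $\absp{x} > 1$, and for every $\al \in \Z$ we have $\absp{\al} \le 1 < \absp{x}$, so the ultrametric inequality gives $\absp{x-\al} = \absp{x} > 1$. Therefore the hypothesis $\absp{x-\al}<1$ of the first branch of Proposition \ref{PropValueRP} is never satisfied, and we fall into the second branch to conclude $\rp(x) = x - \tfrac{1}{2}$, as claimed.

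Next I would handle the case $x \in \Z_p$. Since $\Z_p/p\Z_p \cong \F_p$, there is a unique $\al \in \{0,1,\dots,p-1\} \subset \Z$ with $\absp{x-\al} < 1$, so Proposition \ref{PropValueRP} applies and yields
\begin{equation}
\notag
\rp(x) = x - \frac{x}{p} + \frac{\al}{p} - \left\lceil\frac{\al}{p}\right\rceil.
\end{equation}
Now take any sequence $x_n \in \N_0$ with $x_n \to x$ in $\Z_p$. For $n$ large, $\absp{x_n - \al} \le \max(\absp{x_n - x}, \absp{x-\al}) < 1$, so $x_n \equiv \al \pmod{p}$ as ordinary integers; writing $x_n = \al + p m_n$ with $m_n \in \N_0$, one computes directly that $\left\lceil \tfrac{x_n}{p}\right\rceil = m_n + \left\lceil \tfrac{\al}{p}\right\rceil = \tfrac{x_n - \al}{p} + \left\lceil\tfrac{\al}{p}\right\rceil$. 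Passing to the $p$-adic limit shows that the limit in the definition exists and equals $\tfrac{x-\al}{p} + \left\lceil\tfrac{\al}{p}\right\rceil$.

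Substituting this value of $\left\lceil \tfrac{x}{p}\right\rceil$ back, we see
\begin{equation}
\notag
x - \left\lceil\frac{x}{p}\right\rceil = x - \frac{x-\al}{p} - \left\lceil\frac{\al}{p}\right\rceil = x - \frac{x}{p} + \frac{\al}{p} - \left\lceil\frac{\al}{p}\right\rceil = \rp(x),
\end{equation}
which finishes the proof. There is no real obstacle here; the only step requiring attention is verifying that the limit defining $\left\lceil\tfrac{x}{p}\right\rceil$ exists and is independent of the chosen approximating sequence $x_n$, which is handled by the congruence argument above.
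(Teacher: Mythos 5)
Your proposal is correct and follows essentially the same route as the paper: both arguments reduce to writing $x=\al+y$ with $\al\in\{0,\dots,p-1\}$ and $y\in p\Z_p$ and then using the elementary identity $\left\lceil\frac{\al+pm}{p}\right\rceil=m+\left\lceil\frac{\al}{p}\right\rceil$ to evaluate the limit defining $\left\lceil\frac{x}{p}\right\rceil$ and match it with the expression $x-\frac{x}{p}+\frac{\al}{p}-\left\lceil\frac{\al}{p}\right\rceil$ from Proposition \ref{PropValueRP}. You are somewhat more explicit than the paper about the existence and sequence-independence of the limit and about the trivial case $x\in\Q_p\setminus\Z_p$, but this is a matter of detail, not of method.
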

\begin{proof}
It is easily seen that the limit of $\left\lceil\frac{x_n}{p}\right\rceil$ exists
and that $\left\lceil\frac{y}{p}\right\rceil=\frac{y}{p}$ if $y\in p\Z_p$. Now, 
if $x\in\Z_p$, then $x=\al+y$ where $\absp{x-\al}<1$ is such that 
$0\le\al\le p-1$ and where $y\in p\Z_p$. Then
\begin{equation}
\notag
x-\frac{x}{p}+\frac{\al}p-\left\lceil\frac{\al}{p}\right\rceil=
x-\frac{y}{p}-\left\lceil\frac{x-y}{p}\right\rceil=
x-\frac{y}{p}-\left\lceil\frac{x}{p}\right\rceil+\left\lceil\frac{y}{p}\right\rceil=
x-\left\lceil\frac{x}{p}\right\rceil.
\end{equation}
\end{proof}

\begin{Remark}
The function $\rp(x)$ is a zeta value. More precisely, $\rp(x)=-\zeta_{p,1}(0,(1),x)$,
where $\zeta_{p,1}(s,(1),x)$ is Kashio's Hurwitz zeta function.
In the complex case, the value 
at $s=0$ of the Hurwitz zeta-function is $\zeta(0,x)=-x+1/2$,
 which is in agreement with the $p$-adic
case when $x\in\C_p\setminus\Z_p$.
\end{Remark}


\section{The distribution formula}

Let $n=mp^r$ with $p\nmid m$, and let $g:\C_p\to\C_p$ be any $p$-adic function. 
Suppose we want to compute the sum
$\sum_{k=0}^{n-1}g\bigg(\frac{x+k}{n}\bigg)$. Then we can write
\begin{align}
\notag
\sum_{k=0}^{n-1}g\bigg(\frac{x+k}{n}\bigg)&=
\sum_{k=0}^{mp^r-1}g\bigg(\frac{x+k}{mp^r}\bigg)=
\sum_{i=0}^{m-1} \ \sum_{j=ip^r}^{p^r\!-\!1\!+ip^r}g\bigg(\frac{x+j}{mp^r}\bigg)\\
\notag&=\sum_{i=0}^{m-1}\sum_{j=0}^{p^r-1}g\bigg(\frac{x+j+ip^r}{mp^r}\bigg)=
\sum_{j=0}^{p^r-1}\sum_{i=0}^{m-1}g\bigg(\frac{\frac{x+j}{p^r}+i}{m}\bigg).
\end{align}
Let $y=\frac{x+j}{p^r}$. If we can compute
$\sum_{i=0}^{m-1}g\bigg(\frac{y+i}{m}\bigg)$, then we reduce
the comuputation of the original sum to the case $n=p^r$. This can be done when 
$g=\LP$.

\begin{Lemma}
Let $p\nmid m$. Then, for all $y\in\C_p$,
\begin{equation}
\notag
\sum_{i=0}^{m-1}\LP\bigg(\frac{y+i}m\bigg)=
\LP(y)-\log_p(m)\RP(y).
\end{equation}
\end{Lemma}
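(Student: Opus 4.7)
The plan is to reduce everything to the Volkenborn integral representation $\LP(x)=\int_{\Z_p}\varphi_p(x+t)\,dt$ and exploit the fact that multiplication by $m$ preserves $p$-adic absolute values when $p\nmid m$.

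First I would establish a scaling identity for $\varphi_p$: for $p\nmid m$ and any $z\in\C_p$,
$$
\varphi_p(mz)=m\,\varphi_p(z)+m\log_p(m)\,z\,\chi(z).
$$
This is immediate from the definition of $\varphi_p$. Since $\absp{m}=1$, we have $\absp{mz}=\absp{z}$, so $\chi(mz)=\chi(z)$. If $\absp{z}<1$ both sides are zero, while if $\absp{z}\ge 1$ the identity reduces to $\log_p(mz)=\log_p(m)+\log_p(z)$, which holds for the Iwasawa logarithm.

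Second, I would apply Lemma \ref{LemmaDist} with $N=m$ to the locally analytic function $t\mapsto \varphi_p(y+t)$:
$$
\LP(y)=\int_{\Z_p}\varphi_p(y+t)\,dt=\frac{1}{m}\sum_{k=0}^{m-1}\int_{\Z_p}\varphi_p(y+k+mt)\,dt.
$$
Now substitute $z=\tfrac{y+k}{m}+t$ (so that $mz=y+k+mt$) into the scaling identity of Step~1. This yields
$$
\varphi_p(y+k+mt)=m\,\varphi_p\bigl(\tfrac{y+k}{m}+t\bigr)+m\log_p(m)\bigl(\tfrac{y+k}{m}+t\bigr)\chi\bigl(\tfrac{y+k}{m}+t\bigr).
$$
Integrating over $t\in\Z_p$ and using the definitions of $\LP$ and $\rp$ gives
$$
\frac{1}{m}\int_{\Z_p}\varphi_p(y+k+mt)\,dt=\LP\!\bigl(\tfrac{y+k}{m}\bigr)+\log_p(m)\,\rp\!\bigl(\tfrac{y+k}{m}\bigr).
$$
Summing over $0\le k\le m-1$ and combining with the previous display yields
$$
\LP(y)=\sum_{k=0}^{m-1}\LP\!\bigl(\tfrac{y+k}{m}\bigr)+\log_p(m)\sum_{k=0}^{m-1}\rp\!\bigl(\tfrac{y+k}{m}\bigr).
$$

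Third, I would prove the parallel distribution identity for $\rp$ itself:
$$
\sum_{k=0}^{m-1}\rp\!\bigl(\tfrac{y+k}{m}\bigr)=\rp(y).
$$
This is again Lemma \ref{LemmaDist} applied to $f(t)=(y+t)\chi(y+t)$, using the same substitution $mz=y+k+mt$ together with the trivial identity $(y+k+mt)\chi(y+k+mt)=m\bigl(\tfrac{y+k}{m}+t\bigr)\chi\bigl(\tfrac{y+k}{m}+t\bigr)$. Finally, Proposition \ref{PropValueRP} identifies $\rp(y)$ with $\RP(y)$, and substituting back produces the claimed formula.

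The only real subtlety is verifying the scaling identity for $\varphi_p$ and noting that $\chi(mz)=\chi(z)$ makes the decomposition clean; once that is in hand, everything reduces to two applications of the Volkenborn distribution lemma.
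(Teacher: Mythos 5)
Your proof is correct and follows essentially the same route as the paper: both rest on Lemma \ref{LemmaDist} applied to the Volkenborn integrand, together with the facts that $\absp{m}=1$ forces $\chi(mz)=\chi(z)$ and that $\log_p(mz)=\log_p(m)+\log_p(z)$. The paper merely runs the computation in the opposite direction, collapsing $\sum_{i}\LP\big(\frac{y+i}{m}\big)$ into a single Volkenborn integral and then splitting off the $\log_p(m)\rp(y)$ term, which avoids your separate auxiliary identity $\sum_{k}\rp\big(\frac{y+k}{m}\big)=\rp(y)$.
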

\begin{proof}
Since $\absp{m}=1$,
\begin{align}
\notag
\sum_{i=0}^{m-1}\LP&\bigg(\frac{y+i}m\bigg)=\sum_{i=0}^{m-1}\int_{\Z_p}
\bigg(\frac{y+i}m+t\bigg)\big(\log_p\bigg(\frac{y+i}m+t\bigg)-1\big)
\chi\bigg(\frac{y+i}m+t\bigg)dt\\
\notag&=\frac1m\sum_{i=0}^{m-1}
\int_{\Z_p}(y+i+mt)\big(\log_p(y+i+mt)-\log_p(m)-1\big)\chi(y+i+mt)dt.
\end{align}
Using Lemma \ref{LemmaDist}, we conclude that
\begin{align}
\notag
\sum_{i=0}^{m-1}\LP&\bigg(\frac{y+i}m\bigg)=
\int_{\Z_p}(y+t)\big(\log_p(y+t)-\log_p(m)-1\big)\chi(y+t)dt\\
\notag&=\int_{\Z_p}(y+t)\big(\log_p(y+t)-1\big)\chi(y+t)dt
-\log_p(m)\int_{\Z_p}(y+t)\chi(y+t)dt\\
\notag&=\LP(y)-\log_p(m)\RP(y).
\end{align}
\end{proof}

By the above comments we obtain
\begin{equation}
\label{casepr}
\sum_{k=0}^{n-1}\LP\bigg(\frac{x+k}{n}\bigg)=
\sum_{j=0}^{p^r-1}\LP\bigg(\frac{x+j}{p^r}\bigg)-
\log_p(n)\sum_{j=0}^{p^r-1}\RP\bigg(\frac{x+j}{p^r}\bigg).
\end{equation}

Now we generalize a little bit to compute the two sums in the right hand of 
\eqref{casepr} working with only one function.

Let $f:\C_p\setminus\{0\}\to\C_p$, i.e., a function possibly not defined at 0.
Suppose $f$ is locally analytic and $f(px)=f(x)$ for all 
$x\in\C_p\setminus\{0\}$. Then clearly
\begin{equation}
\label{fpk}
f(p^kx)=f(x)
\end{equation} 
for all $k\in\Z$ and $x\in\C_p\setminus\{0\}$.   
Let $F$ be the function defined by
$$
F(x)=\int_{\Z_p}(x+t)f(x+t)\chi(x+t)dt.
$$
Then $F$ is a locally analytic function over $\C_p$ (same proof  as for $\LP$).

Notice that if $f(x)=1$ then
$F(x)=\RP(x)$, and  if $f(x)=\log_p(x)-1$ then $F(x)=\LP(x)$.
Thus, we are going to compute $\sum_{j=0}^{p^r-1}F\bigg(\frac{x+j}{p^r}\bigg)$,
and this would give us the value of the right hand of \eqref{casepr}. We
will do this in two complementary disjoint cases. 
First we need some properties of $\chi$.

\begin{Lemma}
\label{lemmay+z}
If $\absp{z}<1$, then $\chi(y)=\chi(y+z)$ for all $y\in\C_p$.
\end{Lemma}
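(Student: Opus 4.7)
The plan is to unfold the definition of $\chi$ and argue directly from the ultrametric (non-Archimedean) property of the $p$-adic absolute value $\absp{\cdot}$. Recall that $\chi(w)=1$ when $\absp{w}\ge 1$ and $\chi(w)=0$ when $\absp{w}<1$, so the claim amounts to showing that the condition ``$\absp{w}\ge 1$'' is preserved under adding a quantity $z$ with $\absp{z}<1$.

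I would split into two cases according to the value of $\absp{y}$. In the case $\absp{y}\ge 1$, since $\absp{y}\ge 1>\absp{z}$, the strict inequality in the ultrametric law gives $\absp{y+z}=\max(\absp{y},\absp{z})=\absp{y}\ge 1$, hence $\chi(y)=1=\chi(y+z)$. In the case $\absp{y}<1$, the usual ultrametric inequality yields $\absp{y+z}\le\max(\absp{y},\absp{z})<1$, so $\chi(y)=0=\chi(y+z)$.

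There is no real obstacle here; the only thing to be careful about is to use the sharp form of the ultrametric inequality (equality when the two arguments have distinct norms) in the first case, and the weak form in the second case. This two-line argument handles all $y\in\C_p$ and completes the proof.
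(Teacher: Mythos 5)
Your proof is correct and follows essentially the same route as the paper: the same case split on whether $\absp{y}\ge 1$ or $\absp{y}<1$, using the equality case of the ultrametric inequality when the norms differ and the weak form otherwise. Nothing to add.
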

\begin{proof}
If $\chi(y)=1$ then $\absp{y}\ge1$. In this case $\absp{y+z}=\absp{y}\ge1$,
hence $\chi(y+z)=1$. If $\chi(y)=0$ then $\absp{y}<1$. In this case 
$\absp{y+z}\le\max\{\absp{y},\absp{z}\}<1$,
hence $\chi(y+z)=0$. 
\end{proof}

\begin{Lemma}
\label{lemmawp}
Let $W_p$ be the set containing all $x\in\C_p$ such that  
$x\notin\zcp$, or $x\in\zcp$ and $\overline{x}\notin\F_p$. Then
$W_p$ is $\Z_p$-invariant, meaning that, if $x\in W_p$ and $t\in\Z_p$,
then $x+t\in W_p$.
\end{Lemma}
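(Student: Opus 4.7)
The plan is to split the verification into the two cases encoded in the definition of $W_p$, using in each case only the strong triangle inequality (for the valuation case) and the fact that reduction modulo the maximal ideal of $\zcp$ is a ring homomorphism sending $\Z_p$ onto $\F_p$ (for the residue-field case).

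First I would handle the case $x\notin\zcp$, i.e.\ $\absp{x}>1$. Since $t\in\Z_p$ we have $\absp{t}\le1<\absp{x}$, so the non-Archimedean property yields $\absp{x+t}=\absp{x}>1$, hence $x+t\notin\zcp$ and thus $x+t\in W_p$.

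Next I would handle the remaining case: $x\in\zcp$ with $\ol{x}\notin\F_p$. Here $\absp{x+t}\le\max\{\absp{x},\absp{t}\}\le1$, so $x+t\in\zcp$ and reduction to the residue field is defined. Since reduction is a ring homomorphism, $\ol{x+t}=\ol{x}+\ol{t}$ in $\ol{\F}_p$. Because $t\in\Z_p$, its reduction $\ol{t}$ lies in $\F_p$. If we had $\ol{x+t}\in\F_p$, then $\ol{x}=\ol{x+t}-\ol{t}\in\F_p$, contradicting the assumption $\ol{x}\notin\F_p$. Hence $\ol{x+t}\notin\F_p$, so $x+t\in W_p$.

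There is no real obstacle here; the only thing to keep straight is that in the second case one needs $x+t$ to remain in $\zcp$ before one can even speak of $\ol{x+t}$, which is exactly what the strong triangle inequality guarantees. Combining the two cases proves that $x+t\in W_p$ for every $x\in W_p$ and $t\in\Z_p$.
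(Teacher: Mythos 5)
Your proof is correct and follows essentially the same two-case argument as the paper: the strong triangle inequality for $x\notin\zcp$, and the homomorphism property of reduction (with the contradiction $\ol{x}=\ol{x+t}-\ol{t}\in\F_p$) for the residue-field case. Your explicit remark that $x+t$ stays in $\zcp$ before reducing is a small point the paper leaves implicit, but otherwise the arguments coincide.
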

\begin{proof}
By cases. First, $x\notin\zcp$ means that $\absp{x}>1$. If $t\in\Z_p$, then
$\absp{t}\le1$, so that $\absp{x+t}=\absp{x}>1$, and then $x+t\notin\zcp$.
Now, let $x\in\zcp$ such that $\overline{x}\notin\F_p$ and let $t\in\Z_p$. 
Since $\overline{t}\in\F_p$, if $\overline{x+t}\in\F_p$, then $\overline{x+t}-\overline{t}=
\overline{x}+\overline{t-t}=\overline{x}\in\F_p$, a contradiction.
\end{proof}

\begin{Corollary}
\label{corxrjt}
Let $x\in W_p$ and $t\in\Z_p$. Then $\chi(x+t)=1$, and 
$\chi((x+j)/p^r+t)=1$ for all $r\in\N$ and $j\in\N_0$.
\end{Corollary}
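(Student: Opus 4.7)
The plan is to unpack the two claims via two short case analyses using the definition of $W_p$ and the ultrametric inequality, leveraging Lemma \ref{lemmawp} for the invariance of $W_p$ under translation by $\Z_p$.

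For the first claim, I would start by observing that Lemma \ref{lemmawp} gives $x+t\in W_p$. By definition of $W_p$, either $x+t\notin\zcp$, in which case $\absp{x+t}>1$ and so $\chi(x+t)=1$, or else $x+t\in\zcp$ with $\overline{x+t}\notin\F_p$. In the second case, since $0\in\F_p$, we must have $\overline{x+t}\neq 0$, hence $\absp{x+t}=1$, and again $\chi(x+t)=1$.

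For the second claim, the key observation is that it suffices to show $\absp{(x+j)/p^r}>1$: because then $\absp{t}\le 1$ together with the strict version of the ultrametric inequality forces $\absp{(x+j)/p^r+t}=\absp{(x+j)/p^r}>1$, which gives $\chi((x+j)/p^r+t)=1$. To prove $\absp{(x+j)/p^r}>1$, I split according to the two alternatives in the definition of $W_p$. If $x\notin\zcp$, then $\absp{x}>1$ and $\absp{j}\le 1$ yield $\absp{x+j}=\absp{x}>1$, so $\absp{(x+j)/p^r}=p^r\absp{x}>1$. If instead $x\in\zcp$ with $\overline{x}\notin\F_p$, then $\overline{x}\neq 0$ forces $\absp{x}=1$, and since $\overline{j}\in\F_p$ while $\overline{x}\notin\F_p$, the reduction $\overline{x+j}=\overline{x}+\overline{j}$ still lies outside $\F_p$, in particular is nonzero, so $\absp{x+j}=1$ and $\absp{(x+j)/p^r}=p^r\ge p>1$.

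There is no real obstacle: the argument is essentially bookkeeping with the ultrametric and the defining dichotomy of $W_p$. The only mildly delicate point is that Lemma \ref{lemmay+z} is not directly applicable to $z=t\in\Z_p$ (which only gives $\absp{t}\le 1$, not the strict inequality required by that lemma); this is why I reduce everything to the strict inequality $\absp{(x+j)/p^r}>1$ and then invoke the ultrametric property directly, rather than trying to cite Lemma \ref{lemmay+z}.
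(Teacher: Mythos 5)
Your proof is correct and follows essentially the same route as the paper: both arguments rest on the observation that $y\in W_p$ forces $\absp{y}\ge 1$, use Lemma \ref{lemmawp} for the translate $x+t$, and deduce $\absp{(x+j)/p^r}=p^r\absp{x+j}>1$ before handling the final translate by $t$. The only cosmetic difference is that at the last step you invoke the strict ultrametric inequality directly, whereas the paper notes that $(x+j)/p^r\notin\zcp$ places it back in $W_p$ and applies Lemma \ref{lemmawp} once more; both are fine.
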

\begin{proof}
First notice that if $y\in W_p$ then $\absp{y}\ge1$; 
if not, $\absp{y}<1$ and this would imply 
$\overline{y}=\overline{0}\in\F_p$. Let $t\in\Z_p$. 
If $x\in W_p$, by Lemma \ref{lemmawp}, $x+t\in W_p$ so that 
$\absp{x+t}\ge1$ and this gives $\chi(x+t)=1$. Also, if 
$j\in\N_0$, $x+j\in W_p$, so that $\absp{x+j}\ge1$. Then, for $r\in\N$,
$\absp{(x+j)/p^r}=\absp{(x+j)}p^r>1$. In 
particular $(x+j)/p^r\in W_p$, so that $(x+j)/p^r+t\in W_p$,
and then $\absp{(x+j)/p^r+t}\ge1$, i.e., $\chi((x+j)/p^r+t)=1$.
\end{proof}

Now we compute the mentioned sums involving $F$.
Recall that for $x\in\zcp$ such that $\ol{x}\in\F_p$ we let $\ell(x)$
be the unique natural number satisfying $1\le\ell(x)\le p$ and
$\ol{x}=\ol{\ell(x)}$.

\begin{Lemma}
\label{lemmayesfp}
Let $r\in\N$ and let $x\in\zcp$ such that $\overline{x}\in\F_p$. Then
\begin{equation}
\notag
\sum_{k=0}^{p^r-1}F\bigg(\frac{x+k}{p^r}\bigg)=
F(x)+\sum_{j=0}^{p^{r-1}-1}F\bigg(\frac{x'+j}{p^{r-1}}\bigg),
\end{equation}
where $x'=(x+p-\ell(x))/p$.
\end{Lemma}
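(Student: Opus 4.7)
The plan is to partition the sum $\sum_{k=0}^{p^r-1}F\bigl(\frac{x+k}{p^r}\bigr)$ according to the residue of $k$ modulo $p$. Since $\ol{x}=\ol{\ell(x)}\in\F_p$, we have $\ol{x+k}=0$ precisely when $k\equiv p-\ell(x)\pmod p$. For such $k$, writing $k=(p-\ell(x))+pk'$ with $0\le k'<p^{r-1}$, a direct computation gives
$$
\frac{x+k}{p^r}=\frac{x+p-\ell(x)+pk'}{p^r}=\frac{x'+k'}{p^{r-1}},
$$
so these $p^{r-1}$ indices contribute exactly $\sum_{j=0}^{p^{r-1}-1}F\bigl(\frac{x'+j}{p^{r-1}}\bigr)$, the second sum on the right-hand side. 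It remains to show that the terms with $k\not\equiv p-\ell(x)\pmod p$ sum to $F(x)$.

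For such $k$ we have $x+k\in\zcp$ with $\ol{x+k}\ne0$, so $\absp{x+k}=1$, whence $\absp{\frac{x+k}{p^r}+t}=p^r>1$ and $\chi\bigl(\frac{x+k}{p^r}+t\bigr)=1$ for every $t\in\Z_p$. Iterating $f(pz)=f(z)$ (so $f(p^rz)=f(z)$) yields $f\bigl(\frac{x+k}{p^r}+t\bigr)=f(x+k+p^rt)$, and since $\absp{x+k+p^rt}=1$ we may freely insert $\chi(x+k+p^rt)=1$; extracting the scalar $1/p^r$ then gives
$$
F\bigl(\tfrac{x+k}{p^r}\bigr)=\frac{1}{p^r}\int_{\Z_p}(x+k+p^rt)\,f(x+k+p^rt)\,\chi(x+k+p^rt)\,dt.
$$
Next I apply Lemma~\ref{LemmaDist} with $N=p^r$ to the locally analytic function $g(u):=(x+u)f(x+u)\chi(x+u)$ (locally analytic on $\C_p$ by the same argument as for $\varphi_p$, since $\chi$ is locally constant on $\C_p$), obtaining
$$
F(x)=\int_{\Z_p}g(u)\,du=\frac{1}{p^r}\sum_{k=0}^{p^r-1}\int_{\Z_p}(x+k+p^rt)\,f(x+k+p^rt)\,\chi(x+k+p^rt)\,dt.
$$

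The main obstacle, and the crux of the argument, is to verify that every term of this last sum with $k\equiv p-\ell(x)\pmod p$ vanishes: once this is shown, $F(x)$ reduces to the $k\not\equiv p-\ell(x)$ portion, which by the previous paragraph equals $\sum_{k\not\equiv p-\ell(x)}F\bigl(\frac{x+k}{p^r}\bigr)$. For such $k=(p-\ell(x))+pk'$ I decompose
$$
x+k+p^rt=(x-\ell(x))+p\bigl(1+k'+p^{r-1}t\bigr).
$$
By the defining property of $\ell(x)$, $\absp{x-\ell(x)}<1$, and since $1+k'+p^{r-1}t\in\Z_p$ we have $\absp{p(1+k'+p^{r-1}t)}\le p^{-1}$. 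The ultrametric inequality then forces $\absp{x+k+p^rt}<1$, so $\chi(x+k+p^rt)=0$ and the integrand is identically zero. Combining this vanishing with the first-paragraph computation yields the lemma.
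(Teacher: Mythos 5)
Your proof is correct and follows essentially the same route as the paper: the same split of the indices $k$ according to whether $\overline{x+k}=0$ (equivalently $\absp{x+k}<1$), the same change of variables producing the sum over $x'$, and the same combination of $f(p^rz)=f(z)$, the stability of $\chi$ under perturbations of absolute value less than $1$, and Lemma~\ref{LemmaDist} to identify the remaining terms with $F(x)$. The only cosmetic difference is that you expand $F(x)$ by Lemma~\ref{LemmaDist} and discard the vanishing terms, whereas the paper runs the same computation in the opposite direction by inserting $\chi(x+k)$ and extending the sum over all $k$.
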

\begin{proof}
Since $\absp{x}\le1$, then $\absp{x+y}\le1$ for all $y\in\zcp$, and in particular,
$\absp{x+k}\le1$ for all $k\in\N_0$. Hence, we can write
\begin{equation}
\label{FcaseFp}
\sum_{0\leq k<p^r}F\bigg(\frac{x+k}{p^r}\bigg)=
\sum_{\substack{0\leq k<p^r\\ \absp{x+k}<1}}F\bigg(\frac{x+k}{p^r}\bigg)+
\sum_{\substack{0\leq k<p^r\\ \absp{x+k}=1}}F\bigg(\frac{x+k}{p^r}\bigg).
\end{equation}

We first compute the first sum in the right hand of \eqref{FcaseFp}. Notice
that $\absp{x+k}<1$  if and only if $\absp{\ell(x)+k}<1$, and this ocurrs
 if and only if $p\mid(\ell(x)+k)$. Hence
\begin{align}
\notag
\sum_{\substack{0\leq k<p^r\\ \absp{x+k}<1}}F\bigg(\frac{x+k}{p^r}\bigg)&=
\sum_{\substack{0\leq k<p^r\\ p\mid\ell(x)+k}}F\bigg(\frac{x+k}{p^r}\bigg)=
\sum_{\substack{p-\ell(x)\leq k<p^r-\ell(x)\\ p\mid\ell(x)+k}}F\bigg(\frac{x+k}{p^r}\bigg)\\
\notag&=\sum_{\substack{0\leq i\leq p^r-p\\ p\mid i}}F\bigg(\frac{x+p-\ell(x)+i}{p^r}\bigg)\\
\notag&=\sum_{0\leq j<p^{r-1}}F\bigg(\frac{x+p-\ell(x)+pj}{p^r}\bigg)\\
\notag&=\sum_{0\leq j<p^{r-1}}F\bigg(\frac{x'+j}{p^{r-1}}\bigg).
\end{align}

Now, recalling the definition of $F$, the second sum in the right hand of \eqref{FcaseFp} is
$$
\sum_{\substack{0\leq k<p^r\\ \absp{x+k}=1}}F\bigg(\frac{x+k}{p^r}\bigg)=
\sum_{\substack{0\leq k<p^r\\ \absp{x+k}=1}}\int_{\Z_p}
\bigg(\frac{x+k}{p^r}+t\bigg)f\bigg(\frac{x+k}{p^r}+t\bigg)
\chi\bigg(\frac{x+k}{p^r}+t\bigg)dt.
$$
Since $\absp{x+k}=1$ and $r\ge1$, then $\absp{(x+k)/p^r}=p^r>1$.
Since also $\absp{t}\le1$, then  $\absp{(x+k)/p^r+t}=p^r>1$, and we
deduce that $\chi((x+k)/p^r+t)=1$. Then, using \eqref{fpk},
\begin{align}
\notag
\sum_{\substack{0\leq k<p^r\\ \absp{x+k}=1}}F\bigg(\frac{x+k}{p^r}\bigg)&=
\frac1{p^r}\sum_{\substack{0\leq k<p^r\\ \absp{x+k}=1}}\int_{\Z_p}
(x+k+p^rt)f(x+k+p^rt)dt\\
\notag&=\frac1{p^r}\sum_{0\leq k<p^r}\int_{\Z_p}
(x+k+p^rt)f(x+k+p^rt)\chi(x+k)dt,
\end{align}
and using Lemma \ref{lemmay+z} with $y=x+k$ and $z=p^rt$, and
Lemma \ref{LemmaDist}, we finally obtain
\begin{align}
\notag
\sum_{\substack{0\leq k<p^r\\ \absp{x+k}=1}}F\bigg(\frac{x+k}{p^r}\bigg)&
=\frac1{p^r}\sum_{0\leq k<p^r}\int_{\Z_p}(x+k+p^rt)f(x+k+p^rt)\chi(x+k+p^rt)dt\\
\notag&=\int_{\Z_p}(x+t)f(x+t)\chi(x+t)dt\\
\notag&=F(x).
\end{align}
The lemma follows.
\end{proof}

\begin{Lemma}
\label{lemmanotfp}
Let $r\in\N$, and let $x\in\C_p$ such that $x\notin\zcp$, 
or $x\in\zcp$ and $\overline{x}\notin\F_p$. Then
\begin{equation}
\notag
\sum_{k=0}^{p^r-1}F\bigg(\frac{x+k}{p^r}\bigg)=F(x).
\end{equation}
\end{Lemma}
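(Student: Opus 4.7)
The hypothesis $x\notin\zcp$ or ($x\in\zcp$ and $\overline{x}\notin\F_p$) is exactly the condition $x\in W_p$ in Lemma \ref{lemmawp}. The key point is that for such $x$, Corollary \ref{corxrjt} tells us that $\chi((x+k)/p^r+t)=1$ identically for all $k\in\N_0$ and $t\in\Z_p$, which makes the cutoff factor disappear from the integrand. I would use this, together with the scaling relation \eqref{fpk} and the distribution Lemma \ref{LemmaDist}, to fold the left-hand side back into $F(x)$.

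First, I would write out $F((x+k)/p^r)$ as a Volkenborn integral and drop the $\chi$ factor using Corollary \ref{corxrjt}. Then, noting $(x+k)/p^r+t=(x+k+p^r t)/p^r$, I would apply \eqref{fpk} with $k=-r$ to rewrite $f((x+k+p^r t)/p^r)=f(x+k+p^r t)$. This yields
$$
F\bigg(\frac{x+k}{p^r}\bigg)=\frac{1}{p^r}\int_{\Z_p}(x+k+p^r t)f(x+k+p^r t)dt.
$$
Next, I would reinsert $\chi(x+k+p^r t)$: since $x\in W_p$ implies $x+k\in W_p$ by Lemma \ref{lemmawp}, one has $\absp{x+k}\geq 1>\absp{p^r t}$, hence $\absp{x+k+p^r t}=\absp{x+k}\geq 1$ and the inserted $\chi$ equals $1$.

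Finally, I would apply Lemma \ref{LemmaDist} with $N=p^r$ to the function $g(y):=yf(y)\chi(y)$, which is locally analytic on $\C_p$ for the same reasons invoked in the paragraph preceding Definition \ref{DefLP} ($g$ vanishes on the clopen ball $B(0;1^-)$ and equals the locally analytic function $yf(y)$ on its clopen complement, which avoids the pole of $f$ at $0$). This gives
$$
F(x)=\int_{\Z_p}g(x+t)dt=\frac{1}{p^r}\sum_{k=0}^{p^r-1}\int_{\Z_p}g(x+k+p^r t)dt,
$$
and the right-hand side agrees term by term with the sum over $k$ computed in the previous step.

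The computation is essentially forced once one observes that $W_p$ is constructed precisely so that $\chi\equiv 1$ on every relevant orbit. The only non-trivial check is the local analyticity of $yf(y)\chi(y)$ so that Lemma \ref{LemmaDist} applies; I expect this to be the mildest obstacle, handled by the same clopen-partition argument used for $\varphi_p$ itself.
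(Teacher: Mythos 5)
Your proposal is correct and follows essentially the same route as the paper: drop the factor $\chi$ via Corollary \ref{corxrjt}, use the homogeneity \eqref{fpk} to pull out $p^{-r}$, and collapse the sum with Lemma \ref{LemmaDist}. Your extra step of reinserting $\chi$ so that Lemma \ref{LemmaDist} is applied to the globally locally analytic function $y\mapsto yf(y)\chi(y)$ (rather than to $y\mapsto yf(y)$, which is undefined at $0$) is a small but welcome refinement of the paper's argument.
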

\begin{proof}
Using Corollary \ref{corxrjt}, equation \eqref{fpk} and Lemma \ref{LemmaDist}, we obtain
\begin{align}
\notag
\sum_{k=0}^{p^r-1}F\bigg(\frac{x+k}{p^r}\bigg)&=\sum_{k=0}^{p^r-1}\int_{\Z_p}
\bigg(\frac{x+k}{p^r}+t\bigg)f\bigg(\frac{x+k}{p^r}+t\bigg)dt\\
\notag&=\frac1{p^r}\sum_{k=0}^{p^r-1}\int_{\Z_p}(x+k+p^rt)f(x+k+p^rt)dt\\
\notag&=\int_{\Z_p}(x+t)f(x+t)dt\\
\notag&=F(x).
\end{align}
\end{proof}

Now, let $x\in\C_p$, and fix $n=mp^r\in\N$\,.
We define next a finite  sequence $x_i\in\C_p$
of length at most $r+1$. Write $x_0:=x$. 
If $x_0\notin\zcp$, or
$x_0\in\zcp$ but $\ol{x_0}\notin\F_p$, then the sequence stops. 
If $x_0\in\zcp$ and $\ol{x_0}\in\F_p$, then define
$$
x_1:=\frac{x_0+p-\ell(x_0)}{p}.
$$
We repeat the above process: if $x_j\notin\zcp$, or
$x_j\in\zcp$ but $\ol{x_j}\notin\F_p$, then the sequence stops. 
If $x_j\in\zcp$ and $\ol{x_j}\in\F_p$, then
$$
x_{j+1}:=\frac{x_j+p-\ell(x_j)}{p}.
$$
Let $s$ the least non negative integer, or $+\infty$,  such that $x_s\notin\zcp$, or 
$x_s\in\zcp$ but $\ol{x_s}\notin\F_p$, and set $\omega=\min(r,s)$.
Define our sequence to be $x_0,\dots,x_\omega$.
Notice that the length of the sequence is $\omega+1\le r+1$.

\begin{Proposition}
\label{mainprop}
With notation as above,
\begin{equation}
\notag
\sum_{k=0}^{p^r-1}F\bigg(\frac{x+k}{p^r}\bigg)=
\sum_{j=0}^{\omega}F(x_j).
\end{equation}
\end{Proposition}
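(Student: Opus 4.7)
The plan is to prove this by induction on $r$, using Lemmas \ref{lemmayesfp} and \ref{lemmanotfp} to peel off one level of $p$ at a time and match it with one term of the sequence $x_0, x_1, \ldots, x_\omega$.

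For the base case $r = 0$, the left hand side is just $F(x)$, and by definition of the sequence we have $\omega = \min(0, s) = 0$ and $x_0 = x$, so the right hand side is $F(x_0) = F(x)$, as required.

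For the inductive step, assume the statement holds for $r-1$ and consider $r \geq 1$. I split into the two complementary cases that correspond exactly to the dichotomy in the construction of the sequence. If $x \notin \zcp$, or $x \in \zcp$ with $\ol{x} \notin \F_p$, then $s = 0$, hence $\omega = \min(r, 0) = 0$ and the sequence consists of the single term $x_0 = x$. By Lemma \ref{lemmanotfp} the sum on the left equals $F(x) = F(x_0)$, as desired. If instead $x \in \zcp$ and $\ol{x} \in \F_p$, then $s \geq 1$ and $x_1 = (x_0 + p - \ell(x_0))/p$ is defined. Lemma \ref{lemmayesfp} gives
\begin{equation}
\notag
\sum_{k=0}^{p^r-1}F\bigg(\frac{x+k}{p^r}\bigg) = F(x_0) + \sum_{j=0}^{p^{r-1}-1}F\bigg(\frac{x_1+j}{p^{r-1}}\bigg).
\end{equation}

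The key observation, which is the only non-routine bookkeeping step, is that the finite sequence associated to $x_1$ with parameter $r-1$ coincides with the tail $x_1, x_2, \ldots, x_\omega$ of the original sequence. Indeed, the recursive rule $x_{j+1} = (x_j + p - \ell(x_j))/p$ depends only on $x_j$, so the sequence generated from $x_1$ is $x_1, x_2, \ldots$. The stopping index starting from $x_1$ is $s - 1$ (reduced by one because we have already consumed one step), so its $\omega$-value equals $\min(r-1, s-1) = \min(r, s) - 1 = \omega - 1$ (using $s \geq 1$ and $r \geq 1$). Applying the inductive hypothesis to $x_1$ with $r-1$ yields
\begin{equation}
\notag
\sum_{j=0}^{p^{r-1}-1}F\bigg(\frac{x_1+j}{p^{r-1}}\bigg) = \sum_{j=0}^{\omega-1} F(x_{1+j}) = \sum_{j=1}^{\omega} F(x_j),
\end{equation}
and combining with the displayed identity gives exactly $\sum_{j=0}^{\omega} F(x_j)$.

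The main obstacle is not analytical but combinatorial: one must verify carefully that the sequence attached to $x_1$ with $r-1$ is precisely the shifted sequence $(x_1, \ldots, x_\omega)$ of the original, and that the $\min$ defining $\omega$ behaves correctly under this shift. Once that bookkeeping is in place, the induction closes immediately by invoking the two preceding lemmas in the appropriate case.
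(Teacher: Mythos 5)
Your proof is correct and follows exactly the route the paper intends: the paper's own proof is the one-line remark that the result ``follows inductively applying Lemma \ref{lemmayesfp} and Lemma \ref{lemmanotfp}'', and your argument simply fills in that induction, including the bookkeeping that the sequence attached to $x_1$ with parameter $r-1$ is the shifted tail with $\omega$-value $\omega-1$. No gaps.
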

\begin{proof}
Follows inductively applying Lemma \ref{lemmayesfp} and Lemma \ref{lemmanotfp}.
\end{proof}

As a corollary we prove our distribution formula for $\LP$.

\begin{Theorem}
\label{theo.dist.general1}
With notation as above,
\begin{equation}
\label{dist.general1}
\sum_{k=0}^{n-1}\LP\bigg(\frac{x+k}n\bigg)=
\sum_{j=0}^{\omega}\LP(x_j)\ - \ 
\log_p(n)\sum_{j=0}^{\omega}\RP(x_j).
\end{equation}

\end{Theorem}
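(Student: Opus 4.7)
The plan is to recognize that the theorem follows directly by assembling equation \eqref{casepr} with two applications of Proposition \ref{mainprop}. Writing $n = mp^r$ with $p\nmid m$, equation \eqref{casepr} has already reduced the left-hand side to
$$\sum_{j=0}^{p^r-1}\LP\bigg(\frac{x+j}{p^r}\bigg) \ - \ \log_p(n)\sum_{j=0}^{p^r-1}\RP\bigg(\frac{x+j}{p^r}\bigg),$$
so it remains to re-express each of these two $p^r$-fold sums as a sum over the sequence $x_0,\dots,x_\omega$.

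The key observation is that both $\LP$ and $\RP$ fit the general framework of the abstract function
$$F(x) = \int_{\Z_p}(x+t)\,f(x+t)\,\chi(x+t)\,dt$$
introduced just before Lemma \ref{lemmay+z}. Taking $f(x) = \log_p(x) - 1$ recovers $\LP$, and the required scaling condition $f(px) = f(x)$ holds because the Iwasawa logarithm is normalised by $\log_p(p) = 0$; taking the constant $f \equiv 1$ recovers $\RP$ and trivially satisfies the scaling relation. In both cases $f$ is locally analytic on $\C_p\setminus\{0\}$, so the hypotheses of Proposition \ref{mainprop} are met.

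The final step is to invoke Proposition \ref{mainprop} in each of these two cases. Since the sequence $x_0, x_1, \dots, x_\omega$ is determined intrinsically by $x$, $p$ and $r$, and does not depend on the choice of $f$, the same indexing appears in both applications:
$$\sum_{j=0}^{p^r-1}\LP\bigg(\frac{x+j}{p^r}\bigg) = \sum_{j=0}^{\omega}\LP(x_j), \qquad \sum_{j=0}^{p^r-1}\RP\bigg(\frac{x+j}{p^r}\bigg) = \sum_{j=0}^{\omega}\RP(x_j).$$
Substituting these two identities into \eqref{casepr} yields \eqref{dist.general1}. There is essentially no obstacle once the machinery of Section~4 is in place; the only point worth pausing over is the verification that $f(px)=f(x)$ for $f(x)=\log_p(x)-1$, which rests on the Iwasawa normalisation $\log_p(p)=0$ and is precisely what allows $\LP$ and $\RP$ to be handled by a single uniform argument.
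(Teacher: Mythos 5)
Your proof is correct and follows exactly the paper's route: the paper's own proof is the one-line instruction to apply Proposition \ref{mainprop} with $F=\LP$ and $F=\RP$ in formula \eqref{casepr}, which is precisely what you do. Your additional remark that the scaling hypothesis $f(px)=f(x)$ for $f(x)=\log_p(x)-1$ rests on the Iwasawa normalisation $\log_p(p)=0$ is a worthwhile detail the paper leaves implicit.
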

\begin{proof}
Apply Proposition \ref{mainprop} for $F=\LP$ and $F=\RP$
in formula \eqref{casepr}.
\end{proof}


\section{Relation with the functions of Diamond and Morita}
We now take a look at the relation of $\LP$ with Diamond's 
and Morita's functions. 

Let us start with Diamond's \cite{Dia} function 
\begin{equation}
\notag
\LD(x):=\int_{\Z_p}(x+t)\big(\log_p(x+t)-1\big)dt
\qquad\qquad(x\in\C_p\setminus\Z_p).
\end{equation}

Recall that in Lemma \ref{lemmawp} we defined $W_p$ to be 
the set containing the $x\in\C_p$ such that  
$x\notin\zcp$, or $x\in\zcp$ and $\overline{x}\notin\F_p$. 
Obviously, $W_p\subset\C_p\setminus\Z_p$.
\begin{Proposition}
\label{PropositionLPLD}
For $x\in W_p$ we have
\begin{equation}
\notag
\LP(x)=\LD(x).
\end{equation}
\end{Proposition}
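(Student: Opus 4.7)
The plan is to reduce the statement to a pointwise comparison of the integrands in the Volkenborn integrals defining $\LP(x)$ and $\LD(x)$. Recall
$$
\LP(x)=\int_{\Z_p}(x+t)\big(\log_p(x+t)-1\big)\chi(x+t)\,dt,\qquad
\LD(x)=\int_{\Z_p}(x+t)\big(\log_p(x+t)-1\big)\,dt,
$$
so the only obstruction to equality is the factor $\chi(x+t)$ appearing in $\LP$ but not in $\LD$. The main step is therefore to verify that $\chi(x+t)=1$ for all $t\in\Z_p$ when $x\in W_p$.

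Fortunately this is exactly the content of Corollary \ref{corxrjt}: for $x\in W_p$ and $t\in\Z_p$, we already have $\chi(x+t)=1$. (Intuitively: if $x\notin\zcp$ then $\absp{x+t}=\absp{x}>1$ for all $t\in\Z_p$, and if $x\in\zcp$ with $\ol{x}\notin\F_p$ then $\ol{x+t}=\ol{x}+\ol{t}\notin\F_p$, so in particular $\ol{x+t}\neq\ol{0}$, which forces $\absp{x+t}=1$. In both cases $\absp{x+t}\ge1$.) Consequently the two integrands agree pointwise on $\Z_p$.

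Finally, I need to know that both integrals make sense. The integral defining $\LP(x)$ exists for every $x\in\C_p$ by Definition \ref{DefLP} together with Lemma \ref{LemmaVolken}, since $\varphi_p$ is locally analytic on $\C_p$. For the integral defining $\LD(x)$, the inclusion $W_p\subset\C_p\setminus\Z_p$ guarantees that $x+t\notin\{0\}$ throughout $\Z_p$ (indeed $\absp{x+t}\ge1$ by the previous paragraph), so the integrand is a well-defined locally analytic function of $t$ on $\Z_p$ and Lemma \ref{LemmaVolken} again applies. Since the two Volkenborn integrals have equal integrands, their values coincide, giving $\LP(x)=\LD(x)$.

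No real obstacle is anticipated: the entire argument is an immediate application of Corollary \ref{corxrjt}, and the only thing to be a little careful about is noting that $W_p\subset\C_p\setminus\Z_p$ so that $\LD(x)$ is defined in the first place.
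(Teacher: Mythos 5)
Your proof is correct and is essentially the paper's own argument: both reduce the statement to Corollary \ref{corxrjt}, which gives $\chi(x+t)=1$ for all $t\in\Z_p$ when $x\in W_p$, so the integrands of the two Volkenborn integrals coincide. The extra remarks on well-definedness are fine but do not change the route.
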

\begin{proof}
By Corollary \ref{corxrjt}, if $x\in W_p$ then $\chi(x+t)=1$ 
for all $t\in\Z_p$, and thus
$$
\LP(x)=\int_{\Z_p}(x+t)\big(\log_p(x+t)-1\big)dt
\qquad\qquad(x\in W_p),
$$
i.e., $\LP$ and $\LD$ are identical on $W_p$.
\end{proof}

Let us prove now that the distribution formula \eqref{distLD} 
restricted to $W_p$,
this is, for all $n\in\N$
\begin{equation}
\notag
\sum_{k=0}^{n-1}\LD\bigg(\frac{x+k}{n}\bigg)=\LD(x)-
\bigg(x-\frac12\bigg)\log_p(n)\qquad(x\in W_p),
\end{equation}
is a special case of Theorem \ref{theo.dist.general1}.
First, if $x\in W_p$, then by definition, we have that
$s=0$ in our sequence. Hence, $\omega=\min(r,s)=0$
and \eqref{dist.general1} becomes
\begin{equation}
\notag
\sum_{k=0}^{n-1}\LP\bigg(\frac{x+k}n\bigg)=
\LP(x)\ - \ \log_p(n)\RP(x),
\end{equation}
since $x_0=x$. Now, it is easily seen after Lemma
\ref{lemmawp} that if $x\in W_p$ then all the numbers $(x+k)/n$
are also in $W_p$. Since $\LP(x)=\LD(x)$ for $x\in W_p$, then
the equation above becomes
\begin{equation}
\notag
\sum_{k=0}^{n-1}\LD\bigg(\frac{x+k}n\bigg)=
\LD(x)\ - \ \log_p(n)\RP(x).
\end{equation}
Finally, using Proposition \ref{PropValueRP} for $x\in W_p$, we obtain
\begin{equation}
\notag
\sum_{k=0}^{n-1}\LD\bigg(\frac{x+k}n\bigg)=
\LD(x)\ - \ \log_p(n)\bigg(x-\frac12\bigg).
\end{equation}

We now consider the relation of $\LP$
with Morita's \cite{Mor} function $\LM$. Recall that $\LM$ is defined
by the Volkenborn integral
$$
\LM(x) =\int_{\Z_p}(x+t)\big(\log_p(x+t)-1\big)
\chi^{\phantom{-1}}_{\Z_p^*}\!(x+t)dt
\qquad (x\in\Z_p).
$$
The direct relation between $\LP$ and $\LM$
is easy since $\LM(x)$ is actually $\LP(x)$ restricted to $\Z_p$.
To see this, let us go back to the function $\varphi_p$ defined 
by \eqref{phiDef1}.
We have
\begin{equation}
\notag
\varphi_p(x)=
\begin{cases}\displaystyle
 0 \quad &\text{if }\absp{x}<1, \\
 x\log_p(x)-x \quad &\text{if } \absp{x}\geq1,
\end{cases}
\end{equation}
and if we restrict ourselves to $x\in\Z_p$, then
$$
\big\{x\in\Z_p\,\big|\,\absp{x}<1\big\}=p\Z_p \text{ \  and \ }
\big\{x\in\Z_p\,\big|\,\absp{x}\geq1\big\}=\Z_p^*.
$$
Hence we have
\begin{equation}
\notag
\LP(x) =\int_{\Z_p}\chi^{\phantom{-1}}_{\Z_p^*}\!(x+t)
(x+t)\big(\log_p(x+t)-1\big)dt
\qquad (x\in\Z_p),
\end{equation}
and this is exactly Morita's function $\LM$.

An important property of $\LM$ is that it has a
power series expansion around $0$, valid for all $x\in p\Z_p$.
Namely,
for all $x\in p\Z_p$ we have the identity  \cite[Lemma 58.1]{Sch1}
\begin{equation}
\label{MorSeries}
\LM(x)=\lambda_1x +\sum_{n=1}^{\infty}
\frac{(-1)^{n+1}\,\lambda_{n+1}}{n(n+1)}\ x^{n+1},
\end{equation}
where
$$
\lambda_1:=\int_{\Z_p}\chi^{\phantom{-1}}_{\Z_p^*}\!(t)
\log_p(t)dt,\quad
\lambda_{n+1}:=\int_{\Z_p}\chi^{\phantom{-1}}_{\Z_p^*}\!(t)\,
t^{-n}dt
\qquad(n\in\N).
$$
Moreover, the right side of \eqref{MorSeries} defines an analytic function
on the open unit ball  $B(0;1^-)\subset\C_p$ \cite[Lemma 58.2]{Sch1}.
Hence, we extend the domain of $\LM$ to $B(0;1^-)\cup\Z_p$ by
defining
\begin{equation}
\label{LMinB1}
\LM(x):=\lambda_1x +\sum_{n=1}^{\infty}
\frac{(-1)^{n+1}\,\lambda_{n+1}}{n(n+1)}\ x^{n+1}
\qquad(x\in B(0;1^-)).
\end{equation}
It can be shown that with this extended definition
$\LM$ is no longer the Iwasawa 
logarithm of a $p$-adic function.
\begin{Proposition}
\label{CorLPLM}
For all $x\in\Z_p\cup B(0;1^-)$ we have $\LP(x)=\LM(x)$.
\end{Proposition}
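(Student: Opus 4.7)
The case $x \in \Z_p$ is already established in the paragraph preceding the proposition, so the remaining task is to prove $\LP(x) = \LM(x)$ on $B(0;1^-)$, where $\LM$ is defined by the convergent power series \eqref{LMinB1}. My plan is to expand the integral defining $\LP$ into a power series in $x$ and match it coefficient-by-coefficient with \eqref{LMinB1}.

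The first step is to simplify the integrand. For $x \in B(0;1^-)$ and $t \in \Z_p$, the ultrametric inequality gives $\absp{x+t} \le 1$, with equality if and only if $\absp{t} = 1$; consequently $\chi(x+t) = \chi^{\phantom{-1}}_{\Z_p^*}(t)$ on the integration domain, so
\[
\LP(x) = \int_{\Z_p} \chi^{\phantom{-1}}_{\Z_p^*}(t)\,(x+t)\big(\log_p(x+t) - 1\big)\,dt.
\]
For $t \in \Z_p^*$, I would then factor $x + t = t(1 + x/t)$ with $\absp{x/t} = \absp{x} < 1$, expand $\log_p(1 + x/t) = \sum_{k \ge 1} (-1)^{k+1}(x/t)^k/k$, and rearrange. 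A routine computation based on the identity $(1+u)\log_p(1+u) = u + \sum_{n \ge 2} \tfrac{(-1)^n}{n(n-1)} u^n$ with $u = x/t$ yields
\[
(x+t)\big(\log_p(x+t) - 1\big) = t(\log_p t - 1) + x \log_p t + \sum_{n=2}^{\infty} \frac{(-1)^n}{n(n-1)}\, t^{1-n} x^n.
\]

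The second step is termwise Volkenborn integration against $\chi^{\phantom{-1}}_{\Z_p^*}(t)$. The constant term integrates to $\LM(0)$, which equals $0$ by evaluating \eqref{LMinB1} at $x = 0$. The coefficient of $x$ integrates to $\lambda_1$. For $n \ge 2$, recalling the notation $\lambda_{k+1} := \int_{\Z_p}\chi^{\phantom{-1}}_{\Z_p^*}(t)\,t^{-k}\,dt$, the coefficient of $x^n$ becomes $\tfrac{(-1)^n}{n(n-1)}\lambda_n$. Reindexing by $m = n - 1$ matches \eqref{LMinB1} exactly, giving $\LP(x) = \LM(x)$ on $B(0;1^-)$.

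The main technical point is justifying the interchange of sum and Volkenborn integral. This reduces to the uniform convergence in $t \in \Z_p^*$ of the series $\sum_k \tfrac{(-1)^{k+1}}{k}(x/t)^k$: since $\absp{(x/t)^k/k} = \absp{x}^k/\absp{k}$ does not depend on $t$ and tends to $0$ as $k \to \infty$ for any fixed $x$ with $\absp{x} < 1$, the partial sums are locally analytic on $\Z_p$ and converge uniformly there, so the Volkenborn integral commutes with the limit.
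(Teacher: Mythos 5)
Your proof is correct in strategy but takes a genuinely different route from the paper's. The paper argues softly: it quotes Schikhof's expansion \eqref{MorSeries} to get agreement of $\LP$ and the series \eqref{LMinB1} on $p\Z_p$, and then propagates the identity to $B(0;1^-)$ by local analyticity of $\LP$ plus the fact that $p\Z_p$ has accumulation points. You instead compute $\LP$ on $B(0;1^-)$ directly as a power series and match coefficients with \eqref{LMinB1}, in effect re-proving Schikhof's Lemmas 58.1--58.2 over $\C_p$ rather than citing them. Your reduction $\chi(x+t)=\chi_{\Z_p^*}(t)$, the identity for $(1+u)\log_p(1+u)$, and the coefficient matching with the $\lambda_n$ are all correct, and your computation has the merit of exhibiting $\LP$ as a single convergent power series on the whole ball, which is slightly stronger than what the paper's identity-principle step literally delivers from mere local analyticity.

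There is, however, one genuine flaw, precisely at the step you single out as the main technical point. Uniform convergence of the partial sums does \emph{not} imply that the Volkenborn integral commutes with the limit: the Volkenborn integral is not continuous for the sup norm. For instance $\int_{\Z_p}\binom{t}{n}\,dt=(-1)^n/(n+1)$, so the functions $p^k\binom{t}{p^k-1}$ tend to $0$ uniformly on $\Z_p$ while their integrals all have absolute value $1$. What is true, and what you need, is continuity of the integral for the $C^1$-norm $\max\big(\|g\|_\infty,\ \sup_{s\neq t}\absp{(g(s)-g(t))/(s-t)}\big)$ (see \cite[\S55]{Sch1}). So you must also verify that the difference quotients of the tails $\sum_{n>N}\tfrac{(-1)^n}{n(n-1)}\,x^n\,t^{1-n}\chi_{\Z_p^*}(t)$ tend to $0$ uniformly. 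They do: for $s,t$ in the same ball $a+p\Z_p\subset\Z_p^*$ one has $s^{1-n}-t^{1-n}=(t^{n-1}-s^{n-1})/(st)^{n-1}$ and hence $\absp{s^{1-n}-t^{1-n}}\le\absp{s-t}$, while for $s,t$ in different residue classes (or with one of them in $p\Z_p$, where the terms vanish) $\absp{s-t}=1$; either way the difference-quotient norm of the $n$-th term is at most $\absp{x}^n/\absp{n(n-1)}\le n(n-1)\absp{x}^n\to 0$. With this repair the termwise integration is legitimate and your proof is complete.
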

\begin{proof}
We already proved the equality on $\Z_p$. To prove the
equality on $B(0;1^-)$ first notice that both functions are
equal on $p\Z_p\subset B(0;1^-)$ and that $p\Z_p$ has
infinitely many accumulation points. Since $\LM(x)$ has
the power series expansion \eqref{LMinB1} on $B(0;1^-)$
and $\LP(x)$ is locally analytic on $\C_p$, then $\LP(x)$ 
must be equal to the right side of \eqref{LMinB1} on $B(0;1^-)$.
\end{proof}

Let us prove now that the distribution formula \eqref{distLM},
this is, for all $n\in\N$ not divisible by $p$
\begin{equation}
\notag
\sum_{k=0}^{n-1}\LM\bigg(\frac{x+k}{n}\bigg)=\LM(x)-
\bigg(x-\left\lceil\frac{x}{p}\right\rceil\bigg)\log_p(n)\qquad(x\in\Z_p),
\end{equation}
is a special case of Theorem \ref{theo.dist.general1}.
First, if $p\nmid n$, then by definition, we have that
$r=0$ in our sequence. Hence, $\omega=\min(r,s)=0$
and \eqref{dist.general1} becomes
\begin{equation}
\notag
\sum_{k=0}^{n-1}\LP\bigg(\frac{x+k}n\bigg)=
\LP(x)\ - \ \log_p(n)\RP(x),
\end{equation}
since $x_0=x$. Now, it is easily seen that if $p\nmid n$ and if $x\in\Z_p$ 
then all the numbers $(x+k)/n$
are also in $\Z_p$. Since $\LP(x)=\LM(x)$ for $x\in\Z_p$, then
the equation above becomes
\begin{equation}
\notag
\sum_{k=0}^{n-1}\LM\bigg(\frac{x+k}n\bigg)=
\LM(x)\ - \ \log_p(n)\RP(x).
\end{equation}
Finally, using Corollary \ref{CorValueRP} for $x\in\Z_p$, we obtain
\begin{equation}
\notag
\sum_{k=0}^{n-1}\LM\bigg(\frac{x+k}n\bigg)=
\LM(x)\ - \ \log_p(n)\bigg(x-\left\lceil\frac{x}{p}\right\rceil\bigg).
\end{equation}

We now take a look at the distribution formula \eqref{distLMLD}, that connects
the functions of Morita and Diamond. Let $x\in\Z_p$ and $n\in\N$ divisible by $p$. Then
\begin{equation}
\notag
\sum_{\substack{0\leq j<n\\
p\nmid(x+j)}}\LD\bigg(\frac{x+j}{n}\bigg)=\LM(x)-
\bigg(x-\left\lceil\frac{x}{p}\right\rceil\bigg)\log_p(n).
\end{equation}
By Propositions  \ref{PropositionLPLD} and \ref{CorLPLM},  and by Corollary \ref{CorValueRP},
this formula becomes
\begin{equation}
\label{distfinal}
\sum_{\substack{0\leq j<n\\
p\nmid(x+j)}}\LP\bigg(\frac{x+j}{n}\bigg)=\LP(x)-
\RP(x)\log_p(n)\qquad(x\in\Z_p, p\!\mid\!n).
\end{equation}
Formula \eqref{dist.general1} explains now the restriction in the sum in the left hand
of \eqref{distfinal}:
to be able to omit it then there must appear other terms in its right hand.

Finally, we mention that one can compute explicitly the expansions
in power series of $\LP(x)$ using those of $\LD(x)$ and $\LM(x)$, 
but for the sake of brevity, we omit the calculations here.


\bibliographystyle{amsplain}

\bibliography{references}

%
%
%
%
%

\end{document}